\documentclass{amsart}

\usepackage{geometry}
\usepackage{amsmath}
\usepackage{amssymb}
\usepackage{amsthm}
\usepackage{mathtools}
\usepackage{tikz}

\usepackage{float}

\theoremstyle{definition}
\newtheorem{thm}{Theorem}[section]
\newtheorem{cor}[thm]{Corollary}

\newtheorem{lem}[thm]{Lemma}

\newtheorem{defn}[thm]{Definition}

\newtheorem{rmk}[thm]{Remark}

\newtheorem*{ack*}{Acknowledgements}

\subjclass[2010]{06A07(primary)}

\begin{document}

\title{Orthogonal Symmetric Chain Decompositions of Hypercubes}
\author{Hunter Spink}
\address{Harvard University, Cambridge, 1 Oxford Street, 02138}
\email{hspink@math.harvard.edu}
\thanks{The author would like to thank Harvard University for providing travel funding.}

\begin{abstract}
In 1979, Shearer and Kleitman \cite{SK} conjectured that there exist $\lfloor n/2 \rfloor+1$ orthogonal chain decompositions of the hypercube $Q_n$, and constructed two orthogonal chain decompositions. In this paper, we make the first non-trivial progress on this conjecture since \cite{SK} by constructing three orthogonal chain decompositions of $Q_n$ for $n$ large enough. To do this, we introduce the notion of ``almost orthogonal symmetric chain decompositions''. We explicitly describe three such decompositions of $Q_5$ and $Q_7$, and describe conditions which allow us to decompose products of hypercubes into $k$ almost orthogonal symmetric chain decompositions given such decompositions of the original hypercubes.
\end{abstract}

\maketitle

\section{Introduction}
Symmetric chain decompositions of posets as an object of study were perhaps first introduced by Kleitman in his groundbreaking 1970's paper \cite{LO} resolving the Littlewood-Offord problem on sums of Bernoulli random variables in an arbitrary number of dimensions. The inductive decomposition of the hypercube $Q_n=\{0,1\}^n$ (which is alternatively described as the power set of $[n]=\{1,2,\ldots, n\}$ via indicator functions) produced by the ``duplication'' technique introduced in that paper was later streamlined in Greene-Kleitman \cite{GK}, where a clever parenthesis-matching argument replaced the original inductive decomposition. It is in this form that many authors have attempted to exploit the method. Recently this was carried out by Hersh-Schilling \cite{lyndon}, where they produced a symmetric chain decomposition of the quotient of $Q_n$ by the natural $\mathbb{Z}/n\mathbb{Z}$-action using Lyndon words on the parentheses. However, one of the first applications of the Greene-Kleitman technique was in 1979, when Shearer and Kleitman \cite{SK} used the technique to produce two of what they called ``orthogonal chain decompositions'' of the hypercube $Q_n=\{0,1\}^n$, in order to find a lower bound on the probability that two randomly chosen subsets of $\{1,2,\ldots, n\}$ are comparable. Here two decompositions $\mathcal{F}$ and $\mathcal{G}$ of $Q_n$ into $n \choose {n/2}$ chains (which is the minimal number possible by Sperner's Theorem) are \textit{orthogonal chain decompositions} if for any chain $A$ in $\mathcal{F}$ and any chain $B$ in $\mathcal{G}$, we have $|A \cap B| \le 1$.

The inductive decomposition presented in \cite{LO} can be viewed as a very special case of the following general observation on products of posets with symmetric chain decompositions. Suppose that $P$ and $Q$ are two finite graded posets which have symmetric chain decompositions, then $P \times Q$ has a symmetric chain decomposition which is formed by taking every product of a chain in the decomposition of $P$ with a chain in the decomposition of $Q$, and decomposing the resulting rectangles into symmetric chains. This observation is crucial to the present paper. When we take the special case of $P=Q_k$ and $Q=Q_1$, we obtain precisely the duplication method of Kleitman. We shall see in Section 4 how the argument in \cite{SK} based off of Greene-Kleitman to produce two orthogonal chain decompositions is in some sense a simple consequence of the fact that a $2 \times m$ rectangle has two possible symmetric chain decompositions.

In \cite{SK}, it was conjectured that $\lfloor n/2 \rfloor+1$ orthogonal chain decompositions exist on $Q_n$. Since their paper in 1979, only two orthogonal chain decompositions were known to exist in general. The present paper makes the first non-trivial progress on this conjecture. An immediate corollary of our first main result Theorem 3.1 is the construction of three orthogonal chain decompositions of $Q_n$ for $n$ large enough. As in \cite{SK}, the present paper proceeds by modifying symmetric chain decompositions which almost satisfy the above orthogonality condition between them. We make this explicit in the present paper by saying that two symmetric chain decompositions $\mathcal{F}$ and $\mathcal{G}$ of $Q_n$ are $\textit{almost orthogonal symmetric chain decompositions}$ if for any chain $A$ in $\mathcal{F}$ and any chain $B$ in $\mathcal{G}$, we have $|A \cap B|\le 1$ with the exception that if $A$ and $B$ are both maximal chains (note that there is a unique maximal chain in any symmetric chain decomposition of $Q_n$), then we require that $A$ and $B$ intersect in precisely their maximal and minimal elements.

If we have two almost orthogonal symmetric chain decompositions of $Q_n$ for $n \ge 2$, then it is easy to see that we can move around the empty set in one of the decompositions to produce orthogonal chain decompositions, which is precisely how Shearer and Kleitman produced their pair of orthogonal chain decompositions. We can similarly move around the empty set if we have $k \ge 3$ almost orthogonal symmetric chain decompositions. Indeed, by considering the elements of middle rank(s), one can first show that $k \le \lfloor n/2 \rfloor +1$. Hence when $n \ge 5$ we have the number of minimal length chains in a symmetric chain decomposition ${n \choose \lfloor n/2\rfloor }-{n \choose \lfloor n/2\rfloor-1}$ is strictly larger than $k-1$ if $n$ is even and $2(k-1)$ if $n$ is odd, so we can move the empty set to a chain of minimal length one decomposition at a time without ever creating bad intersections. For $n=4$, one can either employ a more careful analysis of the $1$-element chains, or show by tedious casework that there does not exist three almost orthogonal symmetric chain decompositions (which we will not discuss further in this paper). This allows us to focus our attention on producing collections of $k \ge 3$ almost orthogonal symmetric chain decompositions of $Q_n$.

Similar to how Shearer and Kleitman's proof can be interpreted as using the two possible symmetric chain decompositions on $2 \times m$ to produce orthogonal chains in the product of $Q_1$ by $Q_{n-1}$, the strategy carried out in this paper will be as follows. We take $k$ almost orthogonal decompositions $\mathcal{F}_j^i$ on $Q_{n_i}$ with $n_i \ge 2$ for each $i=1,2,\ldots, r$, and consider cuboids formed for each $j=1,2,\ldots, k$  by the products of symmetric chains, one from each $\mathcal{F}^j_i$ for $i=1,2,\ldots, r$. For each fixed $j$, we decompose the resulting cuboids, and obtain a symmetric chain decomposition of $Q_{n_1}\times \ldots\times Q_{n_r}=Q_{n_1+\ldots +n_r}$. Decomposing the cuboids in such a way that we preserve the pairwise almost orthogonality between the decompositions for various $j$ proves to be a daunting challenge, and cannot always be done without additional conditions. We comment on these conditions below.

First, one can show that if $Q_{n_i}$ is even dimensional, then we have no hope of decomposing the product almost orthogonally unless all of the $1$-element chains in the decompositions of $Q_{n_i}$ are disjoint (by e.g. considering what happens if we take maximal chains in the remaining factors). For $n_i$ odd, we will see in Section 5 that the analogous natural condition to impose on $Q_{n_i}$ to make products of two hypercubes work is the following. If we take the natural bipartite graph formed by the two-element chains in all decompositions of $Q_{n_i}$, then we require the existence of an orientation on its edges such that every vertex has out-degree at most $1$. In the case of $k=2$, this always holds as the union of two partial matchings on a bipartite graph is the union of cycles and paths. We define a collection of almost orthogonal symmetric chain decompositions to be $\textit{good}$ if it satisfies the corresponding condition above (depending on whether $n_i$ is even or odd). As it turns out, we will show in Theorem 5.6 that in the case of two hypercubes, being able to decompose their product into $k$ almost orthogonal decompositions is equivalent to requiring that both hypercubes are good. The second main result Theorem 3.3 says that in the product of hypercubes, we can guarantee a decomposition of the product by requiring the goodness hypothesis on the even dimensional hypercubes and two of the odd dimensional hypercubes (or as many odd dimensional hypercubes as are present in the product if there are fewer than two).

In Subsections 4.2 and 4.3, we explicitly describe collections of three almost orthogonal chain decompositions of $Q_5$ and of $Q_7$, each collection satisfying the above goodness condition. Together with Theorem 3.3, we can conclude Theorem 3.1.

A second, more surprising condition we could impose to guarantee that the product has $k$ almost orthogonal symmetric chain decompositions is seen in the third main result Theorem 3.4. Here we still require the goodness of the even dimensional hypercubes, but for the odd dimensional hypercubes we only impose that there are at least six of them. The proof crucially relies on some very specific symmetric chain decompositions provided in \cite{taut} of cuboids which are formed by the product of at least five $2$-element chains and a chain of length at least $5$.

To aid us in decomposing cuboids coherently, we introduce in Definition 4.1 ``proper'' and ``very proper'' decompositions. These notions guarantee the necessary intersection relations between the chains in the ``(very) proper'' decomposition of the cuboid and symmetric chains in any decomposition of any other cuboid under consideration. This allows us to avoid analyzing the intricate relations between most of the symmetric chain decompositions of the various cuboids, and reduces us to only considering the interplays between the symmetric chain decompositions in a small number of cases. As it turns out, cuboids formed by products of minimal length chains and maximal chains are mostly responsible for the number of cases we will have to handle. Our approach will be systematic, with the difficulty of the arguments decreasing as our toolbox increases in size.

If one only desires to know the proof of Corollary 3.2 (resp. Theorem 3.1) on the existence of three (almost) orthogonal (symmetric) chain decompositions for $n$ large enough, note it is a consequence of Sections 4.2, 4.3, repeated applications of Theorem 6.1, (Theorem 7.1,) and the argument in Corollary 10.7.

The structure of this paper is as follows.

\begin{itemize}
\item In Section 2, we give definitions.
\item In Section 3, we state our main results.
\item In Section 4, we lay the groundwork for the rest of the paper. We first discuss the proof of the two orthogonal decomposition result and its limitations to generalization. Then we produce three almost orthogonal symmetric chain decompositions of $Q_5$ and $Q_7$, and give the notion of a (very) proper decomposition into symmetric chains, which allows us to guarantee orthogonality without having to consider the complex interplay between different decompositions.
\item In Section 5, we attempt to decompose a product of almost orthogonal symmetric chain decompositions. Chains of very small length turn out to pose a non-trivial technical obstruction to making this strategy work, and we encode the precise conditions we need to make everything go through in the definition of goodness. The following four sections mitigate that the product of two good decompositions is not necessarily good.
\item In Section 6, we consider the product of three hypercubes, with two having good decompositions, and one being either odd dimensional or having a good decomposition (an even hypercube without a good decomposition will always fail to work in a product construction).
\item In Section 7, we consider the product of four odd dimensional hypercubes, with two having good decompositions.
\item In Section 8, we consider the product of arbitrarily many good even hypercubes (which is necessary for Theorem 3.3, but not necessary for Theorem 3.1 or Corollary 3.2).
\item In Section 9, we consider the product of 1 odd dimensional hypercube and 3 good even dimensional hypercubes (again, not necessary for Theorem 3.1 or Corollary 3.2).
\item In Section 10, we prove Theorems 3.1 and 3.3, using the results from the previous sections.
\item In Section 11, we prove Theorem 3.4.
\end{itemize}

\section{Definitions}

Following Shearer and Kleitman \cite{SK}, call two decompositions $\mathcal{F}$ and $\mathcal{G}$ of $Q_n$ into $n \choose \lfloor n/2 \rfloor$ chains (which is the minimal possible by Sperner's Theorem) \textit{orthogonal} if for any chain $A$ in $\mathcal{F}$ and any chain $B$ in $\mathcal{G}$, we have $|A \cap B| \le 1$. Say a family of decompositions of $Q_n$ into $n \choose \lfloor n/2 \rfloor$ chains is \textit{orthogonal} if every pair is.

Recall a \textit{symmetric chain decomposition} of $Q_n$ is a decomposition of $Q_n$ into \textit{symmetric chains}, i.e. for each chain there is an integer $k$ such that the chain consists of one element of size $k,k+1,\ldots, n-k$; this is always a decomposition into $n \choose \lfloor n/2 \rfloor$ chains since each chain contains a $\lfloor n/2 \rfloor$-element subset. The following definition is at the heart of this paper.

\begin{defn}
We define symmetric chain decompositions $\mathcal{F}$ and $\mathcal{G}$ of $Q_n$ to be \textit{almost orthogonal} if for $A$ a chain in $\mathcal{F}$ and $B$ a chain in $\mathcal{G}$, $|A \cap B| \le 1$ except when $A$ and $B$ both contain $\varnothing$ and $[n]$ (a unique such chain exists in any symmetric chain decomposition), in which case we require instead $|A \cap B|=2$. Say a family of symmetric chain decompositions is \textit{almost orthogonal} if every pair is.
\end{defn}

To formulate Theorem 3.3, we restate from the introduction the following technical condition, which will be fully motivated in Section 5.

\begin{defn}
We say that a collection of almost orthogonal symmetric chain decompositions $\mathcal{F}_i$ on $Q_n$ is $\textit{good}$ if either $n$ is even and the $1$-element chains in the $\mathcal{F}_i$ are all distinct, or $n$ is odd and the graph with vertex set the union of all $2$-element chains in all $\mathcal{F}_i$ and edges the $2$-element chains themselves can have its edges oriented so that every vertex has out-degree at most $1$ (equivalently, every component is a tree or a tree union an edge).
\end{defn}

\section{Main Results}
We now state our main results. Theorems 3.1 and 3.3, as well as Corollary 3.2, are proved in Section 10, while Theorem 3.4 is proved in Section 11.

\begin{thm}
There exist 3 almost orthogonal symmetric chain decompositions of $Q_n$ for all $n$ large enough.
\end{thm}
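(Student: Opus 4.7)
The plan is to combine the explicit constructions of $3$ good almost orthogonal symmetric chain decompositions on $Q_5$ and $Q_7$ (which will be given in Sections 4.2 and 4.3) with the product theorem (Theorem 3.3, or equivalently repeated applications of Theorem 6.1) via a numerical semigroup argument. Because both dimensions $5$ and $7$ are odd and are coprime, such a reduction will cover every sufficiently large $n$.

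First, I take for granted the explicit constructions from Sections 4.2 and 4.3: $Q_5$ and $Q_7$ each admit $3$ almost orthogonal symmetric chain decompositions, and in each case the collection satisfies the goodness condition of Definition 2.2 (which for odd hypercubes is an orientation condition on the graph of $2$-element chains). Second, I invoke the Sylvester--Frobenius theorem for the coprime pair $(5,7)$: every integer $n\ge (5-1)(7-1)=24$ can be written as $n=5a+7b$ with $a,b\in\mathbb{Z}_{\ge 0}$, and for $n\ge 14$ any such representation automatically satisfies $a+b\ge 2$.

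Given such a decomposition $n=5a+7b$, I identify $Q_n$ with the product $Q_5^{\times a}\times Q_7^{\times b}$, and apply Theorem 3.3. Its hypotheses require goodness on all even-dimensional factors (vacuously satisfied, since $5$ and $7$ are odd) and on at least two odd-dimensional factors (satisfied since every factor is good, and $a+b\ge 2$). The conclusion is that $Q_n$ inherits $3$ almost orthogonal symmetric chain decompositions from the factors. The alternative route suggested in the introduction, namely repeated use of Theorem 6.1 together with the argument in Corollary 10.7, runs along the same lines: at each step one combines three factors, always keeping two good factors and one good-or-odd factor available, and the Frobenius representation $n=5a+7b$ provides the raw material for this iteration.

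The main obstacle is not in this final assembly but in \emph{verifying the hypotheses of the product theorem are indeed satisfied by the explicit $Q_5$ and $Q_7$ decompositions}; that is, one must check not only that these decompositions are almost orthogonal but also that the induced graph on $2$-element chains admits the required orientation. This is the check that reduces the whole theorem to the case-analysis constructions of Sections 4.2 and 4.3 and the combinatorial engine of Section 6 (and Section 7 if one also wants Corollary 3.2). Everything else in the present step, namely the Frobenius reduction, is routine once the goodness of the base cases is in hand.
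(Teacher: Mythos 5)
Your proposal is correct and follows the paper's own route: the paper derives Theorem 3.1 from Corollary 10.6, which applies Theorem 3.3 to the explicit good decompositions of $Q_5$ and $Q_7$ from Sections 4.2 and 4.3 via the Frobenius observation that every $n\ge 24$ is a non-negative integer combination of $5$ and $7$. One small nit in your closing aside: the roles of Theorem 3.1 and Corollary 3.2 are swapped --- the introduction indicates that Theorem 7.1 is needed for the ``shortcut'' proof of the symmetric version (Theorem 3.1), whereas Corollary 3.2 alone can avoid it by the parity-boosting argument of Corollary 10.7, which sacrifices symmetry of the chains.
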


By moving around $\varnothing$ between the chains (see the introduction), this readily implies the following.

\begin{cor}
There exist 3 orthogonal decompositions of $Q_n$ for all $n$ large enough.
\end{cor}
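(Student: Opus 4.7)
The plan is to deduce the corollary from Theorem 3.1 by modifying the three almost orthogonal symmetric chain decompositions it provides into honestly orthogonal chain decompositions, following the approach sketched in the introduction: relocate $\varnothing$ one decomposition at a time.

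First I would apply Theorem 3.1 to obtain, for $n$ sufficiently large, three almost orthogonal symmetric chain decompositions $\mathcal{F}_1,\mathcal{F}_2,\mathcal{F}_3$ of $Q_n$. Let $A_i\in\mathcal{F}_i$ denote the unique maximal chain, which contains both $\varnothing$ and $[n]$. The only failure of orthogonality is that $A_i\cap A_j=\{\varnothing,[n]\}$ has two elements, so I would remove $\varnothing$ from $A_i$ for $i=2,3$ and attach it as the new minimum of some carefully chosen non-maximal chain $C_i\in\mathcal{F}_i$, producing $C_i\cup\{\varnothing\}$.

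The key step is the choice of $C_2$ and $C_3$. A short case analysis (splitting on whether each chain in a pair is modified, and using that $\varnothing$ originally lay only in $A_i$ while $[n]$ still lies in $A_i\setminus\{\varnothing\}$) shows that orthogonality of the resulting family reduces to the three conditions
\[
C_2\cap A_1=\varnothing,\qquad C_3\cap A_1=\varnothing,\qquad C_3\cap C_2=\varnothing.
\]
To find such $C_2,C_3$, I would restrict attention to the minimal length chains of $\mathcal{F}_2$ and $\mathcal{F}_3$, of which there are $\binom{n}{\lfloor n/2\rfloor}-\binom{n}{\lfloor n/2\rfloor-1}$ in each decomposition. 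When $n$ is even the minimal chains are singletons at rank $n/2$, and $A_1$ contains a unique element of that rank, so at most one minimal chain of $\mathcal{F}_2$ fails the first condition; $C_3$ must additionally avoid the single element of $C_2$, giving at most two bad minimal chains in $\mathcal{F}_3$. When $n$ is odd the minimal chains consist of two elements at ranks $(n\pm 1)/2$, and the analogous count yields at most two bad minimal chains for $C_2$ and at most four for $C_3$. Since $\binom{n}{\lfloor n/2\rfloor}-\binom{n}{\lfloor n/2\rfloor-1}$ exceeds $2$ (resp.~$4$) in the even (resp.~odd) case for all $n\ge 5$, suitable $C_2,C_3$ exist.

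There is essentially no serious obstacle here: all the difficulty is packaged inside Theorem 3.1. The only thing to verify is the case analysis confirming that the modified family is genuinely orthogonal, which reduces via almost orthogonality to the three disjointness conditions above. The output chains are no longer symmetric (one chain has gained $\varnothing$ and another has lost it), but this is irrelevant to the corollary, which only requires a decomposition into $\binom{n}{\lfloor n/2\rfloor}$ chains that is pairwise orthogonal in the sense of Shearer--Kleitman.
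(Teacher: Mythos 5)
Your argument is correct and is essentially the same as the paper's: the paper derives Corollary 3.2 from Theorem 3.1 by the ``move $\varnothing$ around'' argument sketched in the introduction, which for general $k$ uses exactly your counting of obstructed minimal-length chains (comparing $\binom{n}{\lfloor n/2\rfloor}-\binom{n}{\lfloor n/2\rfloor-1}$ to $k-1$ or $2(k-1)$). You specialize to $k=3$, and your reduction to the three disjointness conditions $C_2\cap A_1=\varnothing$, $C_3\cap A_1=\varnothing$, $C_2\cap C_3=\varnothing$ and the subsequent count are both sound.
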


This makes progress on the conjecture in Shearer and Kleitman's paper \cite{SK} (which was verified up to $n=4$) that there exist $\lfloor n/2 \rfloor +1$ orthogonal chain decompositions of $Q_n$, easily shown to be an upper bound by considering elements of middle rank(s). We can say the following for $k \ge 3$ almost orthogonal symmetric chain decompositions.

\begin{thm}
For $1 \le i \le r$, suppose we have $k \ge 3$ almost orthogonal symmetric chain decompositions $\mathcal{F}^j_i$ of $Q_{n_i}$, $n_i \ge 2$. Further suppose that $\{\mathcal{F}^j_i\}$ is a good collection whenever $n_i$ is even, and either all or at least two of the $\{\mathcal{F}^j_i\}$ are good collections for $n_i$ odd. Then we can construct $k$ almost orthogonal symmetric chain decompositions for $Q_{n_1+\ldots+n_r}$ by decomposing the cuboids in $\prod_i \mathcal{F}^j_i$ into symmetric chains.
\end{thm}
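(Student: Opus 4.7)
The plan is to prove Theorem 3.3 by induction on $r$, with each inductive step carried out via the appropriate product theorem from Sections 5--9. The base cases are $r=1$ (trivial) and $r=2$, which is Theorem 5.6 equating decomposability of the product into $k$ almost orthogonal SCDs with the goodness of both factors. Throughout, we must remember the crucial fact emphasized in the introduction: the product of two good collections is generally \emph{not} good, so intermediate products cannot be freely reused as inputs to further merge steps.

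For the inductive step I would first consolidate the even factors. Since the collection on each even $Q_{n_i}$ is good by hypothesis, repeated application of the main theorem of Section 8 produces a single ``even block'' whose induced almost orthogonal SCDs remain good. This reduces the problem to at most one (good) even factor together with the odd factors. Letting $o$ denote the number of odd factors remaining, I would split into cases. If $o=0$ we are done. If $o=1$ the hypothesis forces this odd factor to be good, and Theorem 5.6 combines it with the good even block. If $o\ge 2$ there are at least two good odd factors; I would then adjoin factors one at a time via Theorem 6.1 (three hypercubes, two good, the third either odd or good), at each stage placing two untouched good original factors in the two ``good'' slots and placing either the previous intermediate block or a non-good odd factor in the third slot, so that the intermediate block remains of a parity compatible with the next application. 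When only a handful of factors remain, Theorem 7.1 (four odd hypercubes, two good) and the main theorem of Section 9 (one odd plus three good even) cover the boundary configurations in which the iterative use of Theorem 6.1 cannot be continued, for instance when the stock of untouched good odd factors has been depleted but a lone odd factor must still be merged into the even block.

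The main obstacle is the scheduling problem: one must sequence the merges so that each invocation of Theorem 6.1, 7.1, or the main theorem of Section 9 has its hypothesis literally available, which requires carefully preserving an appropriate supply of untouched good original factors and never attempting to combine two non-good factors directly. The small-case theorems of Sections 7 and 9 are included precisely to handle the endgame configurations in which this supply has run low. Once the bookkeeping for this scheduling is set up correctly, the algebraic content of each individual step is a direct application of one of the cited product theorems and the induction closes.
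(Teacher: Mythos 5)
Your proposal contains two genuine gaps, one of which the paper explicitly warns about.

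First, you assert that after consolidating the even factors via Theorem 8.1, the resulting ``even block'' carries almost orthogonal symmetric chain decompositions that ``remain good.'' This is false: Theorem 8.1 only asserts that the product admits $k$ almost orthogonal symmetric chain decompositions, not that the resulting family is good. Indeed, the paper states explicitly after Theorem 5.6 that ``the product of two hypercubes with good families of decompositions may not have a good family of decompositions.'' Consequently the step ``If $o=1$ the hypothesis forces this odd factor to be good, and Theorem 5.6 combines it with the good even block'' does not go through, since Theorem 5.6 requires \emph{both} factors to be good. The paper's actual strategy (Lemma 10.1) avoids ever forming a standalone even block: it reduces to an even number of good even factors plus one odd factor, and then repeatedly applies Theorem 6.1 to the accumulating odd block together with \emph{two untouched good even factors}, so that the good slots are always filled by original, never-combined hypercubes.

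Second, and more fundamentally, not every configuration can be reduced by scheduling applications of the product theorems from Sections 5--9, however carefully one bookkeeps. Once there are at least three odd factors and at most one even factor, the supply of untouched good original factors can be exhausted while non-good intermediate blocks of the wrong parity pile up (e.g.\ five odd factors of which only two are good and no even factors: one application of Theorem 6.1 consumes both good odds, leaving three non-good odd blocks, and neither Theorem 6.1 nor 7.1 nor the Section 9 theorem has its hypothesis available). The paper handles exactly this regime in Lemmas 10.4 and 10.5 by a \emph{direct} argument at the level of cuboid decompositions, invoking Lemmas 8.2, 8.3, 6.5, 7.3 and the top/bottom decomposition trick on the last two factors; this is not merely ``a direct application of one of the cited product theorems.'' Your proposal treats Theorem 7.1 and the Section 9 theorem as a sufficient toolkit for all endgames, which they are not, so the induction you describe does not close.
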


Using a very specific symmetric chain decomposition provided in \cite{taut} of cuboids formed as the product of at least five $2$-element chains with a chain of length at least $5$, we can remove the goodness hypothesis in Theorem 3.3 completely for the odd dimensional hypercubes provided there are at least six of them appearing in the product. As we will see, cuboids with all but one side of length $2$ prove to be some of the most challenging ones to handle.

\begin{thm}
For $1 \le i \le r$, suppose we have $k \ge 3$ almost orthogonal symmetric chain decompositions $\mathcal{F}^j_i$ of $Q_{n_i}$, $n_i \ge 2$, with $\ge 6$ of the $n_i$ odd, and $\{\mathcal{F}^j_i\}$ is a good collection whenever $n_i$ is even. Then we can construct $k$ almost orthogonal symmetric chain decompositions for $Q_{n_1+\ldots+n_r}$ by decomposing the cuboids in $\prod_i \mathcal{F}^j_i$ into symmetric chains.
\end{thm}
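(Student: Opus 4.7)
The plan is to adapt the cuboid-product construction used for Theorem 3.3, replacing the goodness hypothesis on odd hypercubes with a targeted application of the specialized symmetric chain decomposition from \cite{taut} on cuboids formed as products of at least five $2$-element chains with a chain of length at least $5$. As in Theorem 3.3, for each $j \in \{1,\ldots,k\}$ one builds the $j$-th SCD of $Q_{n_1+\cdots+n_r}$ by decomposing every cuboid $C^j = \prod_i C^j_i$ into symmetric chains, with the (very) proper decompositions of Definition 4.1 reducing the pairwise almost-orthogonality check across $j$'s to a manageable set of interaction cases. The new feature here is that without goodness for the odd hypercubes, the cases involving many $2$-element chain factors from odd hypercubes cannot be handled by the techniques of Sections 6--9 alone.

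I would single out six of the odd hypercubes, say $Q_{n_{i_1}},\ldots,Q_{n_{i_6}}$, as the ``problem handlers''. For a cuboid $C^j$ in which at least five of the chains $C^j_{i_1},\ldots,C^j_{i_6}$ are $2$-element and at least one other factor of $C^j$ is a chain of length $\ge 5$, I would apply the decomposition from \cite{taut} directly to the subcuboid consisting of these factors, and decompose the complementary factors via very proper SCDs. The decomposition from \cite{taut} is engineered so that its chains have the intersection behavior required to remain almost orthogonal to chains produced for cuboids of other indices $j'$. Cuboids with fewer than five $2$-element factors drawn from the distinguished six contain at most four ``bad'' $2$-element chains, a situation already absorbed into the arguments of Sections 6--9 using the goodness of the even hypercubes.

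The main obstacle is ensuring a chain of length $\ge 5$ is actually available in every problematic cuboid: if every chosen chain is short, the shape hypothesis of the construction from \cite{taut} is not met. This forces a careful analysis of the ``all-short'' cuboids, which can only arise when every $n_i$ is small (specifically, all odd $n_i = 3$ and all even $n_i = 2$, so every factor chain has length at most $4$). For such configurations, one must either show that with $k \ge 3$ almost orthogonal SCDs on $Q_3$ the $2$-element chain graph is automatically good, so Theorem 3.3 applies directly to those factors, or re-decompose the offending all-short cuboids by grouping two short factors together to produce a compatible intermediate cuboid admitting a chain of length $\ge 5$. Verifying the consistency of these decompositions across all $j$ in the boundary cases, and in particular checking that the combinatorics of the \cite{taut} construction dovetails with the very proper decompositions assigned to the complementary factors, is the technical heart of the argument.
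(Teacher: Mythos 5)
Your core idea — that \cite{taut} should be applied to the cuboids of shape (many $2$'s)\,$\times$\,(a chain of length $\ge 5$), with everything else handled by the machinery of Sections~6--10 — is indeed the approach of the paper. However, the ``main obstacle'' you identify, namely that the requisite chain of length $\ge 5$ might fail to exist when all $n_i$ are small, is spurious: as the paper notes at the start of every product argument, $k \ge 3$ almost orthogonal decompositions on $Q_{n_i}$ force $n_i \ge 4$ (from the bound $k \le \lfloor n_i/2 \rfloor + 1$ mentioned in the introduction). Consequently every odd $n_i$ is at least $5$, and the remaining problematic cuboids are exactly $2 \times 2 \times \cdots \times 2 \times (n_r+1)$ with $n_r+1 \ge 5$, so the shape hypothesis of \cite{taut} is always met. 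A significant portion of your proposal is spent defending against an impossible configuration; this is the main gap to close.

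Structurally, the paper's proof is also more economical than your ``distinguished six problem-handlers'' scheme. It first reduces, by repeatedly applying Theorems~6.1 and~9.1 (one odd hypercube paired with two or three good even ones), to the case of at most one even-dimensional hypercube and at least six odd ones. It then observes that the proofs of Lemmas~10.4 and~10.5 go through verbatim until only the cuboids $2 \times 2 \times \cdots \times 2 \times (n_r+1)$ with at least five $2$'s remain; only for these is the \cite{taut} decomposition invoked. Your scheme of tagging six odd hypercubes in advance and branching on how many of their factors are $2$-element chains is harder to reconcile with the pairwise almost-orthogonality bookkeeping, since the $2$-element factors in a problematic cuboid need not come from any fixed predetermined six hypercubes, and you have not specified why the \cite{taut} chains have the required interactions with chains from other cuboids of other indices $j'$. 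In the paper this is automatic because the \cite{taut} decomposition is constructed to be very proper, and Lemma~4.2 then gives the orthogonality without case analysis; you should make this mechanism explicit rather than gesturing at ``engineered intersection behavior.''
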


As a consequence, if we can produce $k$ almost orthogonal symmetric chain decompositions of hypercubes whose dimensions have no common factor, such that each is either odd dimensional or good, then we can produce $k$ almost orthogonal symmetric chain decompositions of $Q_n$ for $n$ large enough. We will see such families of decompositions in the next section for $k=3$ and $n=5,7$.

\section{Groundwork}
\subsection{Littlewood-Offord decomposition}
Shearer and Kleitman's paper \cite{SK} uses the Greene-Kleitman method \cite{GK} to create orthogonal chain decompositions, but Kleitman's ``duplication'' technique \cite{LO} used to solve the Littlewood-Offord problem yields a recursive proof more in line with the methods used in this paper, so we will recast their proof using this method (it yields the same decompositions).

Given a symmetric chain decomposition of $Q_n$, we can inductively create a symmetric chain decomposition of $Q_{n+1}$ as follows.

For every chain $A:a_k \subset a_{k+1} \subset \ldots \subset a_{n-k}$ in the decomposition of $Q_n$, let $A'$ be the chain $a_k \cup \{n+1\}\subset \ldots \subset a_{n-k} \cup \{n+1\}$. Neither chain is symmetric in $Q_{n+1}$: $A$ is one element too low, and $A'$ is one element too high. If we move the largest element from $A'$ to $A$ or the smallest element from $A$ to $A'$ however, we now have two symmetric chains (if one of the chains happens to be empty after doing this, discard it). Repeating this for every chain in the decomposition of $Q_n$, we get a decomposition of $Q_{n+1}$ into symmetric chains!

Now, suppose we have two almost orthogonal symmetric chain decompositions $\mathcal{F},\mathcal{G}$ of $Q_n$ (clearly such decompositions exist for $n=2$ say). Then for a chain $A$ in $\mathcal{F}$, we apply the above procedure moving the largest element of $A'$ to $A$, and for a chain $B$ in $\mathcal{G}$, we move the smallest element of $B$ to $B'$. It is an easy check that this yields almost orthogonal symmetric chain decompositions for $Q_{n+1}$. We needed to change how we moved the elements, because if we had for example moved the smallest element of $A$ to $A'$ and the smallest element of $B$ to $B'$, and $A,B$ shared the same smallest element, then $A'$ and $B'$ would have shared the same smallest two elements, contradicting orthogonality.

Hence we have produced a decomposition of $Q_{n+1}$ from $Q_n$, and we are done by induction. To produce orthogonal chain decompositions, it suffices to move the empty set from the maximal chain in $\mathcal{F}$ to one of the one or two element long chains disjoint from the maximal chain in $\mathcal{G}$.

We see immediately what the problem is in propagating larger families of almost orthogonal symmetric chain decompositions to higher dimensional hypercubes---it might be impossible to choose the method of duplicating the chains in such a way that it preserves orthogonality, since we now have three chains meeting at a point instead of two, forbidding us from simply ``doing the opposite'' to each of the families (in fact, it is already impossible if we just consider the three maximal chains).

\subsection{Three almost orthogonal symmetric chain decompositions of $Q_5$}

In order to start creating almost orthogonal symmetric chain decompositions of high dimensional hypercubes, we need decompositions that can be used in a product construction. We do this first for $Q_5$, producing $3$ such decompositions. To compactify notation, we omit the empty set and the whole set from our chains, and we further suppose that if a chain is written, then all chains found by cycling the indices should also be written. An easy check shows that these work. The $2$-element chains form five paths of length $3$, so this family of decompositions is good (see Definition 2.2).\\\\
$\mathcal{F}_1$:
\begin{tabular}{c c c c c}
&1 &12 &123 &1235\\
&  &14 &134 &\\
\hline
\end{tabular}
$\varnothing$ and 12345 added onto the chain starting with 1\\
$\mathcal{F}_2$:
\begin{tabular}{c c c c c}
&1 &15 &135 &1345\\
&  &14 &145 &\\
\hline
\end{tabular}
$\varnothing$ and 12345 added onto the chain starting with 5\\
$\mathcal{F}_3$:
\begin{tabular}{c c c c c}
&1 &14 &124 &1234\\
&  &12 &125 &\\
\hline
\end{tabular}
$\varnothing$ and 12345 added onto the chain starting with 3

\subsection{Three almost orthogonal symmetric chain decompositions of $Q_7$}
We now produce $3$ almost orthogonal symmetric chain decompositions of $Q_7$. Grouping the elements modulo the equivalence relation given by cycling the indices, one can readily check the finite number of cases to show that the decompositions below work. The $2$-element chains form seven $K_{1,3}$ graphs, seven paths of length $2$, and seven paths of length $1$, so this family of decompositions is good (see Definition 2.2).\\\\
$\mathcal{F}_1$:
\begin{tabular}{c c c c c c c}
&1 &12 &123 &1234 &12345 &123456\\
&  &13 &134 &1345 &13456 &\\
&  &14 &147 &1247 &12457 &\\
&  &   &124 &1245 &      &\\
&  &   &135 &1357\\
\hline
\end{tabular}
$\varnothing$ and 1234567 added onto chain starting with 1\\
$\mathcal{F}_2$:
\begin{tabular}{c c c c c c c}
&1 &17 &167 &1567 &14567 &134567\\
&  &13 &137 &1357 &13567 &\\
&  &14 &145 &1345 &13457 &\\
&  &   &134 &1347 &      &\\
&  &   &135 &1235\\
\hline
\end{tabular}
$\varnothing$ and 1234567 added onto chain starting with 7\\
$\mathcal{F}_3$:
\begin{tabular}{c c c c c c c}
&1 &13 &136 &1346 &13467 &123467\\
&  &12 &124 &1247 &12347 &\\
&  &14 &134 &1234 &12346 &\\
&  &   &123 &1236 &      &\\
&  &   &125 &1245        &\\
\hline
\end{tabular}
$\varnothing$ and 1234567 added onto chain starting with 6\\

\subsection{Products of almost orthogonal symmetric chain decompositions}
Note that if $\mathcal{F}_1,\mathcal{F}_2$ are a pair of almost orthogonal symmetric chain decompositions of $Q_m$, and $\mathcal{G}_1,\mathcal{G}_2$ are likewise for $Q_n$, then $\mathcal{F}_1\times \mathcal{G}_1$ and $\mathcal{F}_2\times \mathcal{G}_2$ are two decompositions of $Q_{m+n}$ into products of symmetric chains which have very small intersections between each other.

We will see that the product of 2 hypercubes each with $k$ almost orthogonal symmetric chain decompositions can almost be decomposed into $k$ almost orthogonal symmetric chain decompositions, but a problem arises with chains of small length. This problem will later be mitigated by considering products of $\ge 3$ hypercubes. In Definition 4.1, we introduce notions which will turn out to be indispensable in producing almost orthogonal symmetric chain decompositions with the aid of Lemma 4.2.
\begin{defn}
A chain which \textit{skips no ranks} (i.e. the set of ranks forms an interval in $\mathbb{N}$) inside a product of hypercubes $Q_{n_1}\times\ldots\times Q_{n_r}$ is said to be \textit{very proper} if for every pair of distinct elements $a,a'$ in the chain, there is a coordinate $1 \le i \le r$ where the $i$'th coordinates $a_i$, $a'_i$ differ, and $\{a_i,a'_i\} \ne \{\varnothing, [n_i]\}$. A \textit{proper} chain is defined in the same way except we do not require the condition for $\{a,a'\}$ the smallest and largest element of the product hypercube.
We call a decomposition of a subset of $Q_{n_1}\times\ldots\times Q_{n_r}$ (which does not necessarily lie in a rank symmetric way inside the product) into (very) proper chains all of which are symmetric chains with respect to the subset, a \textit{(very) proper decomposition}.
\end{defn}

Note if a subset of the hypercube product is just a product of chains which skip no ranks in the corresponding hypercubes, and no chain is maximal, then any symmetric chain decomposition is very proper. If at least one chain is not maximal, then all proper decompositions are very proper.

\begin{lem}
Suppose we have $r$ hypercubes $Q_{n_i}$, each with a pair of almost orthogonal symmetric chain decompositions $\mathcal{F}^1_i$ and $\mathcal{F}^2_i$. Take two cuboids $A_1 \times A_2 \times \ldots\times A_r$ and $B_1 \times B_2 \times \ldots\times B_r$, where $A_i \in \mathcal{F}^1_i$, $B_i \in \mathcal{F}^2_i$ are symmetric chains, and suppose the first cuboid has a proper decomposition. Then given any symmetric chain decomposition of the second cuboid, every chain in the first cuboid's decomposition intersects every chain in the second cuboid's  decomposition in at most one element unless all $A_i$ and $B_i$ are maximal, in which case we have the exceptional case that the maximal chains in both intersect in precisely their minimal and maximal elements.
\end{lem}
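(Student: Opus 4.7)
The plan is to reduce to a single coordinate and then invoke the almost orthogonality of $\mathcal{F}_i^1$ and $\mathcal{F}_i^2$ in that coordinate. Let $C$ be a chain in the given proper decomposition of $A_1\times\cdots\times A_r$ and let $D$ be a chain in the given symmetric chain decomposition of $B_1\times\cdots\times B_r$. Suppose for contradiction that $a\ne a'$ are distinct elements of $C\cap D$, and first assume $\{a,a'\}\ne\{\varnothing,[n]\}$ (where $n=n_1+\cdots+n_r$). Since $C$ is proper, there is a coordinate $i$ such that $a_i\ne a_i'$ and $\{a_i,a_i'\}\ne\{\varnothing,[n_i]\}$.

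Projecting, the inclusions $C\subseteq\prod_j A_j$ and $D\subseteq\prod_j B_j$ give $a_i,a_i'\in A_i\cap B_i$. But $A_i\in\mathcal{F}_i^1$ and $B_i\in\mathcal{F}_i^2$ are almost orthogonal symmetric chains of $Q_{n_i}$, so $|A_i\cap B_i|\le 1$ unless both $A_i$ and $B_i$ are maximal, in which case $A_i\cap B_i=\{\varnothing,[n_i]\}$. Either alternative contradicts the two conditions on $a_i,a_i'$ furnished by properness. Thus the only way $C\cap D$ can contain two distinct elements is if those elements are $\varnothing$ and $[n]$.

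For $\varnothing$ and $[n]$ to both lie in $\prod_j A_j$ (and likewise $\prod_j B_j$), each factor $A_j$ (respectively $B_j$) must contain $\varnothing$ and $[n_j]$, which in a symmetric chain decomposition of $Q_{n_j}$ forces every $A_j$ and $B_j$ to be maximal. Within each cuboid, the unique chain of any symmetric chain decomposition containing both $\varnothing$ and $[n]$ is that decomposition's maximal chain, so $C$ and $D$ must be precisely those maximal chains. Any hypothetical third intersection point $a''\in C\cap D$ with $a''\notin\{\varnothing,[n]\}$, paired with $\varnothing$, would give a pair not equal to $\{\varnothing,[n]\}$ and run afoul of the paragraph-two argument, forcing $C\cap D=\{\varnothing,[n]\}$ exactly; this is the exceptional case of the lemma. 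The main subtlety of the argument is that the properness condition of Definition 4.1 is engineered precisely so that the paragraph-two coordinate reduction succeeds for every pair of chain elements except the global min-max pair, which is exactly what allows a cuboid's own maximal chain to appear as $C$ without disrupting the conclusion.
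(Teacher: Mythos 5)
Your proof is correct and is essentially the same argument the paper gestures at: the paper's one-line proof just cites the fact that $A_i\cap B_i$ is empty, a single element, or $\{\varnothing,[n_i]\}$ and calls the rest straightforward, which is precisely the coordinate-projection and almost-orthogonality step you carry out in detail. Your write-up simply makes explicit the case analysis (the properness clause ruling out both the $\le 1$ and the $\{\varnothing,[n_i]\}$ alternatives, the reduction of a $\ge 2$-element intersection to the global min/max pair, and the identification of $C,D$ with the two maximal chains) that the paper leaves to the reader.
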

\begin{proof}
Everything follows straightforwardly from the fact that $A_i \cap B_i$ is either empty, a single element in $Q_{n_i}$, or $\{\varnothing, [n_i]\}$.
\end{proof}

\begin{rmk}
In light of Lemma 4.2, it appears at first glance that we made our definition of (very) proper decompositions unnecessarily general, by not forcing the subset of $Q_{n_1}\times\ldots\times Q_{n_r}$ to lie inside the product in such a way that the notions of symmetric chain for the subset and the product coincide. However, we will see later examples of subsets that arise very naturally as $C_1 \times C_2 \times \ldots \times C_k$, with each $C_i$ a (very) proper chain that skips no ranks in a product of hypercubes, such that the $C_i$ are not necessarily symmetric in their respective products, but the notion of symmetric chain in $C_1 \times C_2 \times \ldots \times C_k$ agrees with the notion in the total ambient product. On our way to creating (very) proper decompositions, when we work with a subset of the $C_i$ we obviously need the more general definition, even though it clearly doesn't affect the final symmetry of the chains we create in the total product.
\end{rmk}
\section{Products of 2 Hypercubes and The Notion of Goodness}
Suppose that we have $k$ almost orthogonal symmetric chain decompositions $\mathcal{F}_i$ of $Q_m$, and $\mathcal{G}_i$ of $Q_n$, with $m,n \ge 2$.

If $A,B$ are chains in $\mathcal{F}_i,\mathcal{F}_j$, and $C,D$ are chains in $\mathcal{G}_i,\mathcal{G}_j$, then $(A\times C) \cap (B \times D)= (A \cap B)\times (C \cap D)$, which has size 0,1,2, or 4. If the size is 0 or 1, then any decomposition of the rectangles $A\times C$ and $B \times D$ into symmetric chains (which one can easily check implies they are symmetric in the ambient $Q_{m+n}$) will have intersection of size $0$ or $1$ between their respective chains, so we only have to worry about size 2 and 4 intersections of the rectangles.

If we manage to give a proper decomposition of a rectangle, then Lemma 4.2 says we don't have to worry about intersections of these chains with any other chains.

Consider the decomposition of an $r \times s$ rectangle depicted in Figure 1 for $r=4$, $s=6$.

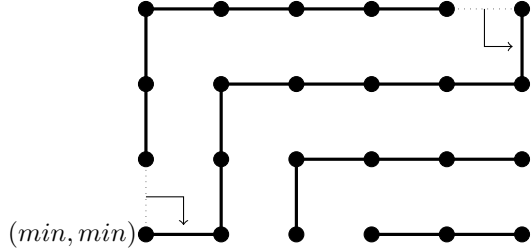
\begin{figure}[H]
\centering
\begin{tikzpicture}
\foreach \x in {0,1,...,5}{                           
    \foreach \y in {0,1,...,3}{                       
    \node[draw,circle,inner sep=2pt,fill] at (\x,\y) {}; 
    }
}
\draw[very thick] (4,3)--(0,3)--(0,1);
\draw[very thick] (5,3)--(5,2)--(1,2)--(1,0)--(0,0);
\draw[very thick] (5,1)--(2,1)--(2,0);
\draw[very thick] (5,0)--(3,0);
\draw[dotted, very thin] (0,0)--(0,1);
\draw[dotted, very thin] (4,3)--(5,3);
\draw (0,0) node [anchor=east] {$(min,min)$};
\draw[->, shorten >=0.125cm] (4.5,3)--(4.5,2.5)--(5,2.5);
\draw[->, shorten >=0.125cm] (0,0.5)--(0.5,0.5)--(0.5,0);
\end{tikzpicture}
\caption{Zigzag decomposition for a $4 \times 6$ rectangle}
\end{figure}

\begin{defn}
For $r,s \ge 3$, we say the \textit{zigzag decomposition} of an $r \times s$ rectangle is the modification of the decomposition by 90 degree clockwise rotated L's as depicted in Figure 1. Specifically, we modify two edges of the two ``leftmost'' chains, at the smallest and largest elements.
\end{defn}

\begin{lem} Given a chain of length $r \ge 3$ in $Q_m$, and a chain of length $s\ge 3$ in $Q_n$, the zigzag decomposition of the product $r \times s$ rectangle is a proper decomposition.
\end{lem}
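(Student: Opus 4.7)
The plan is to verify the two conditions of Definition 4.1: each zigzag chain is symmetric in the $r\times s$ rectangle (skipping no ranks), and each chain is proper. First I would read off the explicit chains from Figure 1 using coordinates $(a_y, b_x) \in A \times B$, where $A = \{a_0 < \cdots < a_{r-1}\} \subset Q_m$ and $B = \{b_0 < \cdots < b_{s-1}\} \subset Q_n$: the long chain $C_1 = (a_0,b_0), (a_0,b_1), (a_1,b_1), \ldots, (a_{r-2},b_1), (a_{r-2},b_2), \ldots, (a_{r-2},b_{s-1}), (a_{r-1},b_{s-1})$; the chain $C_2 = (a_1,b_0),\ldots,(a_{r-1},b_0),(a_{r-1},b_1),\ldots,(a_{r-1}, b_{s-2})$; and for $3 \le k \le \min(r,s)$ the unchanged L-chain $C_k$ going from $(a_0,b_{k-1})$ up to $(a_{r-k}, b_{k-1})$ and then right to $(a_{r-k}, b_{s-1})$. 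Since the rank of $(a_y, b_x)$ in the rectangle is $x+y$, each $C_k$ has consecutive ranks with extremal ranks summing to $r+s-2$, so the symmetry requirement is immediate.

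The substance is properness. A pair $a \ne a'$ in $C_k$ fails properness exactly when every coordinate in which they differ takes values $\{\varnothing, [n_i]\}$, and in our two-coordinate setup this admits three types: (a) differ only in coordinate 1 with values $\{\varnothing, [m]\}$, which requires $A$ maximal; (b) the symmetric failure in coordinate 2, requiring $B$ maximal; and (c) differ in both coordinates with both pairs bad, which forces $(a, a')$ to equal the (min, max) of $Q_m \times Q_n$, the explicit exception allowed for a proper chain. So I only need to rule out (a) and (b).

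For type (a), assume $A$ is maximal, so $\{a_0, a_{r-1}\} = \{\varnothing, [m]\}$. The chains $C_k$ with $k \ge 3$ use first coordinates $a_0, \ldots, a_{r-k}$, so $a_{r-1}$ is absent; $C_2$ uses $a_1, \ldots, a_{r-1}$, so $a_0$ is absent; hence neither admits a type-(a) pair. Only $C_1$ needs real inspection: its modifications at the two extremal corners leave $a_0$ appearing only at $(a_0, b_0)$ and $(a_0, b_1)$, and $a_{r-1}$ only at $(a_{r-1}, b_{s-1})$, and since $r, s \ge 3$ ensures $b_0, b_1, b_{s-1}$ are pairwise distinct, no two such elements share a second coordinate. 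For type (b) with $B$ maximal, the analogous inspection, now grouping by second coordinate, shows the repeated first coordinates in $C_1, C_2$ and $C_k$ always have second-coordinate sets that omit either $b_0$ or $b_{s-1}$. The main obstacle (and the only nontrivial step) is this inspection of $C_1$, and the whole point of the zigzag modification over the plain L-decomposition is precisely to separate the lone occurrence of $a_{r-1}$ (resp.\ $b_{s-1}$) from the block of $a_0$'s (resp.\ $b_0$'s) in $C_1$, so that no collision of the remaining coordinate can occur.
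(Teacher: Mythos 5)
Your proof is correct and takes essentially the same approach as the paper: you read the explicit chains off the zigzag picture and verify that no chain contains a pair with the same $a$-coordinate at $b_0$ and $b_{s-1}$, nor a pair with the same $b$-coordinate at $a_0$ and $a_{r-1}$, which is exactly the ``salient features'' observation the paper makes, just spelled out in coordinates rather than by inspection of Figure 1.
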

\begin{proof}
If we draw the zigzag decomposition as in Figure 1, then the salient features which ensure it is a proper decomposition are that no chain horizontally connects a point on the left with the corresponding point on the right, or vertically connects a point on the bottom with the corresponding point on the top.
\end{proof}

Hence, we can properly decompose all rectangles with both sides of length $\ge 3$. Also, if neither chain is maximal, then any decomposition of the rectangle will be (very) proper. We are thus left to consider rectangles that are the product of a maximal chain and a $1$ or $2$-element chain. Now comes a serious problem though: it is impossible to do a proper symmetric chain decomposition of such rectangles! So we have to carefully consider their pairwise intersections.

We can in fact determine exactly the obstruction to decomposing $Q_m \times Q_n$ in this fashion (by decomposing the rectangles in the products $\mathcal{F}_i \times \mathcal{G}_i$). Let $\epsilon_m$ be $1$ if $m$ is even and $2$ if $m$ is odd, and similarly for $\epsilon_n$ (so $\epsilon$ is the length of the smallest chain in the corresponding hypercube). Note that the only rectangles we have left to deal with are size $\epsilon_m \times (n+1)$ and $(m+1) \times \epsilon_n$. Because $\{\mathcal{F}_i\}$ and $\{\mathcal{G}_i\}$ are each almost orthogonal families, an $\epsilon_m \times (n+1)$ and $(m+1) \times \epsilon_n$ rectangle will intersect in at most one point, so we can handle the intersections of $\epsilon_m \times (n+1)$ rectangles and of $(m+1) \times \epsilon_n$ rectangles separately. We consider $\epsilon_m \times (n+1)$ rectangles, $(m+1) \times \epsilon_n$ is identical with $m,n$ swapped.

If $\epsilon_m=1$, then the rectangles are already just chains, and the condition amounts to saying that the decompositions of $Q_m$ have distinct $1$-element chains.

If $\epsilon_m=2$, then there are two ways of decomposing a $2 \times (n+1)$ rectangle into symmetric chains.

\begin{defn}Say a decomposition of a $2 \times r$ rectangle where the longer chain contains the long upper edge of the rectangle the \textit{top decomposition}, and the other one the \textit{bottom decomposition}, as depicted in Figure 2.
\end{defn}
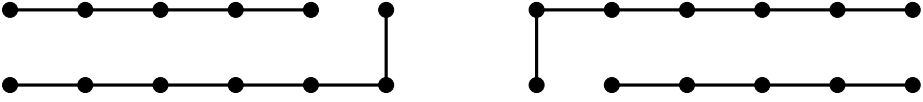
\begin{figure}[H]
\centering
\begin{tikzpicture}
\foreach \x in {0,1,...,5}{                           
    \foreach \y in {0,1}{
    \node[draw,circle,inner sep=2pt,fill] at (\x,\y) {}; 
    }
}

\foreach \x in {7,8,...,12}{                           
    \foreach \y in {0,1}{
    \node[draw,circle,inner sep=2pt,fill] at (\x,\y) {}; 
    }
}
\draw[very thick] (0,0)--(5,0)--(5,1);
\draw[very thick] (0,1)--(4,1);

\draw[very thick] (7,0)--(7,1)--(12,1);
\draw[very thick] (8,0)--(12,0);

\end{tikzpicture}
\caption{Bottom and top decompositions of $2 \times 6$ rectangle respectively}
\end{figure}

If two 2-element chains $A,B$ in $Q_m$ intersect in their top element, then as long as the decompositions of $A \times (n+1)$ and $B \times (n+1)$ are not both top decompositions, the chains will intersect appropriately (and if they were both top decompositions, then the two longest chains would not intersect appropriately). Similarly if $A,B$ intersect in their bottom element, with bottom decompositions. If they don't intersect at all, then it doesn't matter how we decompose the rectangles.

Consider a directed graph whose vertex set is the union of all 2-element chains in all decompositions $\mathcal{F}_i$ of $Q_m$, with edges corresponding to the 2-element chains themselves. Give all edges corresponding to a bottom decomposition the orientation pointing from the smaller subset to the larger one, and if it corresponds to a top decomposition, give it the opposite orientation. Then the corresponding graph-theoretic condition is that the out-degree of every vertex is at most 1.

\begin{defn}
Call a graph \textit{good} if it can be given an orientation such that the out-degree of every vertex is at most 1.
\end{defn}

\begin{lem}
A graph is good if and only if every component has at most one cycle (equivalently, every component is either a tree, or a tree union an edge).
\end{lem}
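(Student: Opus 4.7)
The plan is to prove both directions by an edge-counting argument in each connected component, together with an explicit construction in the converse direction.

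For the forward direction, suppose $G$ admits an orientation with every out-degree at most $1$. In any connected component $C$ on $v$ vertices, the number of edges equals $\sum_{x \in V(C)} \mathrm{outdeg}(x) \le v$. Since $C$ is connected we have $|E(C)| \ge v-1$, so either $|E(C)| = v-1$ (in which case $C$ is a tree) or $|E(C)| = v$ (in which case $C$ is a tree plus one extra edge, hence has exactly one cycle).

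For the converse, I would construct the orientation one component at a time. For a tree component, root it at an arbitrary vertex $r$ and orient every edge from a child toward its parent on the unique path to $r$; each non-root vertex then has out-degree exactly $1$, while $r$ has out-degree $0$. For a unicyclic component, let $C$ be the unique cycle; orient $C$ as a directed cycle so that each vertex of $C$ acquires out-degree $1$ along $C$. The remaining edges form a forest whose components are trees each meeting $C$ in exactly one vertex $v \in V(C)$; root each such tree at its cycle vertex $v$ and orient its edges toward $v$. Every non-cycle vertex then has out-degree $1$ (the edge toward its parent), and every cycle vertex keeps out-degree $1$ from the cycle orientation, since all tree-edges incident to it point inward.

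There is no serious obstacle here: the forward direction is immediate from the identity $\sum \mathrm{outdeg}(x) = |E|$ on each component, and the converse is a standard rooted-tree orientation together with a single cycle-orientation step. The only point requiring a moment of attention is that in the unicyclic case the tree-edges incident to a cycle vertex $v$ must be oriented \emph{into} $v$; this ensures that orienting the cycle $C$ does not cause $v$ to accrue a second outgoing edge.
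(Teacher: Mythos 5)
Your proof is correct. The converse direction (orient a tree toward a root; orient the unique cycle as a directed cycle and hang trees pointing inward) is essentially the same construction the paper uses. Your forward direction, however, takes a genuinely different and cleaner route: the paper argues by contradiction that a component with two cycles would have two disjoint cycles joined by a path that cannot be oriented, whereas you use the identity $\sum_x \mathrm{outdeg}(x) = |E|$ on each component to deduce $|E| \le v$, forcing the component to be a tree or unicyclic. Your edge-counting argument is tighter and avoids the structural claim the paper makes (that two cycles in a component could always be taken disjoint), which is delicate to justify precisely (a theta graph, for instance, has several cycles, no two of which are vertex-disjoint, though its edge count still rules it out). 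The edge-count gives the bound uniformly with no case analysis on how the cycles sit.
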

\begin{proof}
If a component has no cycle, then choose a root vertex and direct all edges towards it. If a component has a unique cycle, give the cycle an orientation, then direct all remaining arrows towards the cycle. If a component has two cycles, then it is easy to see the cycles would have to be disjoint, and then one can check there is no way to orient a path between the two cycles.
\end{proof}

The above discussion shows that the definition of ``goodness'' in Definition 2.2 is precisely what we need to make Theorem 5.6 true.

\begin{thm}
Let $m,n \ge 2$, and suppose $Q_m,Q_n$ each have a family of $k$ almost orthogonal symmetric chain decompositions $\mathcal{F}_i$ and $\mathcal{G}_i$ respectively. Then we can construct $k$ almost orthogonal symmetric chain decompositions of $Q_m \times Q_n=Q_{m+n}$ by decomposing the rectangles in $\mathcal{F}_i\times \mathcal{G}_i$ into symmetric chains if and only if the two families of decompositions are both good.
\end{thm}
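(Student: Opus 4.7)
The plan is to turn the analysis carried out in the paragraphs immediately preceding the theorem into a complete proof: Lemmas 4.2 and 5.5 do essentially all the work, so the task is careful bookkeeping.

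\textbf{Forward direction.} Assume both families are good. I would decompose each product rectangle in $\mathcal{F}_j\times\mathcal{G}_j$ as follows. Using $(A\times C)\cap(B\times D)=(A\cap B)\times(C\cap D)$ together with the almost orthogonality of $\{\mathcal{F}_i\}$ and of $\{\mathcal{G}_i\}$, a rectangle of shape $\epsilon_m\times(n+1)$ meets a rectangle of shape $(m+1)\times\epsilon_n$ in at most one point, so the two types of small rectangles can be dealt with independently. For each rectangle $A\times B$ with $\min(|A|,|B|)\ge 3$, I use the zigzag decomposition of Lemma 4.2, which is proper, so by Lemma 4.2 all its chains meet chains of any symmetric decomposition of any other rectangle in at most one element (with the standard maximal-chain exception). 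For rectangles in which at least one factor is non-maximal and the other has length $\le 2$, the remarks after Definition 4.1 guarantee that any symmetric chain decomposition is (very) proper, and again Lemma 4.2 applies. This leaves only the $\epsilon_m\times(n+1)$ and $(m+1)\times\epsilon_n$ rectangles.

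\textbf{Handling the small rectangles.} By symmetry focus on the $\epsilon_m\times(n+1)$ case. If $\epsilon_m=1$, each such rectangle is itself a chain, and goodness of $\{\mathcal{F}_i\}$ says the $1$-element chains across the $\mathcal{F}_i$ are distinct; hence distinct rectangles lie over distinct singletons and share at most one point by almost orthogonality of $\{\mathcal{G}_i\}$. If $\epsilon_m=2$, Lemma 5.5 furnishes an orientation of the $2$-element chain graph with out-degree at most $1$, and I decompose each $2\times(n+1)$ rectangle with either the top or bottom decomposition dictated by this orientation, exactly as in the paragraph preceding Definition 5.4. The out-degree condition then guarantees that whenever two $2$-element chains from $\mathcal{F}_i,\mathcal{F}_{i'}$ share a vertex, they are not both decomposed in the ``outgoing'' fashion, which is precisely what prevents the two longer chains from sharing two points.

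\textbf{Backward direction.} Conversely, suppose the product decomposition has been constructed and, say, $\{\mathcal{F}_i\}$ is not good. If $\epsilon_m=1$ and two decompositions $\mathcal{F}_i,\mathcal{F}_{i'}$ share a $1$-element chain $\{x\}$, take the maximal chains $B_i\in\mathcal{G}_i$ and $B_{i'}\in\mathcal{G}_{i'}$: the chains $\{x\}\times B_i$ and $\{x\}\times B_{i'}$ are chains of the respective product decompositions, of length $n+1<m+n+1$ hence non-maximal, and they intersect in the two points $(\{x\},\varnothing)$ and $(\{x\},[n])$, violating almost orthogonality. If $\epsilon_m=2$ and the graph is not good, then by Lemma 5.5 every choice of top/bottom decompositions produces some vertex $v$ of out-degree $\ge 2$, i.e.\ two $2$-element chains $A\in\mathcal{F}_i$, $A'\in\mathcal{F}_{i'}$ containing $v$ and both decomposed in the outgoing fashion; their longer chains in the respective product decompositions are non-maximal and share the two points $(v,\varnothing)$, $(v,[n])$, again a contradiction.

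\textbf{Expected obstacle.} The theorem is really a packaging of the previous analysis, so there is no deep technical difficulty; the subtle point worth double-checking is the interaction between a zigzag-decomposed large rectangle and the (arbitrary) symmetric decomposition of a small rectangle, which works precisely because Lemma 4.2 requires propriety on only one of the two decompositions. It also needs to be confirmed that the unique maximal chain of the product decomposition indeed comes from the product of the two maximal chains in $\mathcal{F}_j$ and $\mathcal{G}_j$, so that the maximal-chain exceptional case lines up correctly; this is immediate from the zigzag construction.
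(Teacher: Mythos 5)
Your proof is correct and follows essentially the same approach as the paper: the paper's ``proof'' of Theorem 5.6 is precisely the discussion that precedes it (zigzag for $\ge 3 \times \ge 3$ rectangles, any decomposition for rectangles with both factors non-maximal, top/bottom decompositions governed by an out-degree-$\le 1$ orientation for the $\epsilon_m\times(n+1)$ and $(m+1)\times\epsilon_n$ rectangles), and you have packaged that discussion and made the backward direction explicit, which the paper leaves as a one-line remark. Two small citation slips that do not affect correctness: the zigzag decomposition and its properness are Definition 5.1 and Lemma 5.2 (not Lemma 4.2, which is the intersection lemma you then correctly invoke), and the ``every orientation has a vertex of out-degree $\ge 2$'' step in the backward direction is the contrapositive of Definition 5.4 rather than of Lemma 5.5.
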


We have checked that the decompositions of $Q_5$ and $Q_7$ provided in Sections 4.2 and 4.3 are good when we introduced them. Note that the product of two hypercubes with good families of decompositions may not have a good family of decompositions, so we can't use Theorem 5.6 to produce decompositions of arbitrarily large hypercubes yet. However, we still have the following.

\begin{cor}
$Q_{10}$, $Q_{12}$, $Q_{14}$ all have 3 almost orthogonal symmetric chain decompositions.
\end{cor}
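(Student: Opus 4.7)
The plan is to apply Theorem 5.6 directly, using as input the explicit good families of three almost orthogonal symmetric chain decompositions of $Q_5$ and $Q_7$ constructed in Sections 4.2 and 4.3. Concretely, I would write $Q_{10} = Q_5 \times Q_5$, $Q_{12} = Q_5 \times Q_7$, and $Q_{14} = Q_7 \times Q_7$, and in each case take $\mathcal{F}_i$ and $\mathcal{G}_i$ (for $i=1,2,3$) to be the indicated good family on each factor. Since both factors have $m,n \ge 2$ and are good, Theorem 5.6 guarantees that decomposing the rectangles in $\mathcal{F}_i \times \mathcal{G}_i$ into symmetric chains yields three almost orthogonal symmetric chain decompositions of the product, which is precisely $Q_{10}$, $Q_{12}$, or $Q_{14}$ respectively.

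The only genuine verification required is that the families of Sections 4.2 and 4.3 really are good, and this was already noted when those decompositions were introduced (the $2$-element chains of the $Q_5$ family form five paths of length $3$, and the $2$-element chains of the $Q_7$ family form seven $K_{1,3}$'s together with paths of lengths $1$ and $2$; by Lemma 5.5 these graphs are good since no component contains two cycles).

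There is essentially no obstacle here — the corollary is a direct instantiation of Theorem 5.6 at the smallest non-trivial inputs. The one point worth emphasizing in the write-up is a limitation rather than a difficulty: the resulting three almost orthogonal symmetric chain decompositions of $Q_{10}$, $Q_{12}$, $Q_{14}$ need not themselves be good, so one cannot simply iterate Theorem 5.6 to reach all sufficiently large $n$. Overcoming this limitation is exactly the role of the product constructions for three or more hypercubes developed in Sections 6--9, which are what will ultimately be needed to prove Theorem 3.1.
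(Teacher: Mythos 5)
Your proof is correct and is exactly the argument the paper intends: Corollary 5.7 is a direct application of Theorem 5.6 to the products $Q_5\times Q_5$, $Q_5\times Q_7$, and $Q_7\times Q_7$, using the good families from Sections 4.2 and 4.3. Your closing remark about the product not necessarily inheriting goodness also matches the paper's own commentary immediately preceding the corollary.
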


\section{Product of three hypercubes, one odd and two good, or three good}
The goal of this section is to prove the following.

\begin{thm}
Let $m,n,p \ge 2$, and suppose $Q_m$, $Q_n$, $Q_p$ have $k \ge 3$ almost orthogonal symmetric chain decompositions $\mathcal{F}_i$, $\mathcal{G}_i$, and $\mathcal{H}_i$, two of these families are good, and the remaining family is either good or is associated to an odd dimensional hypercube. Then we can construct $k$ almost orthogonal symmetric chain decompositions of $Q_{m+n+p}$ by decomposing the cuboids in $\mathcal{F}_i\times \mathcal{G}_i \times \mathcal{H}_i$ into symmetric chains.
\end{thm}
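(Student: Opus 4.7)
The approach I would take is to decompose each cuboid $A \times C \times E \in \mathcal{F}_j \times \mathcal{G}_j \times \mathcal{H}_j$ for every $j \in \{1, \ldots, k\}$ into symmetric chains, producing the $k$ symmetric chain decompositions of $Q_{m+n+p}$. The key reduction comes from Lemma 4.2: whenever one of two cuboids admits a proper decomposition, its chains automatically intersect any symmetric chain decomposition of the other cuboid in at most one element (with the standard global-max exception). Hence I only need explicit coordination between cuboid decompositions for pairs of cuboids where \emph{neither} admits a proper decomposition, and these turn out to be highly constrained in shape.

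The first step is to classify cuboids by the factor lengths $|A|,|C|,|E|$. Whenever at least two of these are $\ge 3$, I would construct a proper decomposition of the cuboid by applying a 2D zigzag (Lemma 5.2) in the two long factors, and then decomposing each resulting rectangular slab (a zigzag chain crossed with the third factor) as either a chain, a top/bottom decomposition of a $2\times r$ rectangle, or another zigzag, depending on the slab's dimensions. By Lemma 4.2, such ``generic'' cuboids may be decomposed freely and independently for each $j$. In particular, the \emph{global max cuboid} $A^*\times C^*\times E^*$, whose factors are the unique maximal chains and hence of lengths $m+1,n+1,p+1\ge 3$, falls in this class; different proper decompositions for different $j$ yield almost orthogonal intersections automatically (some care being required when two factor lengths coincide, which I would resolve either by permuting which pair of factors the zigzag is applied to, or by a direct ad hoc modification to avoid length-$1$ chains crossed with a maximal third factor).

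The non-generic cuboids, namely those with at most one factor of length $\ge 3$, are the crux. They are the 3D analogues of the $\epsilon_m\times(n+1)$ and $(m+1)\times\epsilon_n$ rectangles from the 2-hypercube analysis of Theorem 5.6, and the resolution uses goodness in the same spirit: goodness of two families, say $\{\mathcal{F}_j\}$ and $\{\mathcal{G}_j\}$, provides distinct $1$-element chains (for even dimension) and out-degree-$\le 1$-orientable $2$-element chains (for odd dimension), which together dictate a consistent choice of top/bottom decompositions across $j$ for the thin factors coming from these two families. The main obstacle is the case where $\{\mathcal{H}_j\}$ is only odd rather than good: then we have no orientation control over $\mathcal{H}_j$'s $2$-element chains. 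Here the saving feature is that $p$ odd forces every chain in every $\mathcal{H}_j$ to have length $\ge 2$, so the third factor $E$ is never a singleton, meaning that whenever two short factors from the $\mathcal{F}_j,\mathcal{G}_j$ side appear in a non-generic cuboid, the $E$ factor contributes a length-$\ge 2$ dimension that absorbs the uncontrolled choices in $\mathcal{H}_j$. The final step is a finite case analysis on the shapes of non-generic cuboids, organized by the parities of $m,n,p$ and the combinations of $1$- and $2$-element factors that can occur, verifying in each shape that consistent decomposition choices exist compatible with the goodness-induced orientations.
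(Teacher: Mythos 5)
Your overall framework---decompose each cuboid in $\mathcal{F}_j\times\mathcal{G}_j\times\mathcal{H}_j$ for every $j$, invoke Lemma 4.2 to isolate the cuboids lacking proper decompositions, then resolve the remaining intersections via goodness---matches the paper's proof of Theorem 6.1. But there is a genuine gap in the central claim that every cuboid with at least two factor lengths $\ge 3$ admits a proper decomposition. The cuboid $(m+1)\times(n+1)\times 1$ (two maximal chains and a one-element chain $\{\star\}$, which occurs when $p$ is even) admits \emph{no} proper decomposition at all: any symmetric chain decomposition of it has a chain running from $(\varnothing,\varnothing,\star)$ to $([m],[n],\star)$, and this pair differs in only the first two coordinates, each at extremes, while the pair is not the global extremes of $Q_m\times Q_n\times Q_p$ since $\star\neq\varnothing,[p]$. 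So your proposed mitigations---permuting which pair of long factors receives the zigzag, or an ``ad hoc modification''---cannot succeed, because the obstruction is unavoidable. The paper instead accepts that this cuboid's maximal chain is not (very) proper, observes that the only cuboid from another triple that could overlap it at both offending points is another $(m+1)\times(n+1)\times 1$ cuboid, and uses goodness of the even-dimensional factor (distinct one-element chains) to conclude the two cuboids are disjoint.

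A related issue appears for cuboids of shape $(m+1)\times(n+1)\times 2$: zigzagging the two long factors and then taking a top/bottom decomposition of the $2\times(m+n+1)$ slab arising from the maximal zigzag chain produces a chain containing both $(\varnothing,\varnothing,e)$ and $([m],[n],e)$ for the bottom element $e$ of the two-chain, which again violates properness. The paper's Lemma 6.5 gives a very proper decomposition of $2\times r_2\times r_3$ via a bespoke construction (decompose $2\times r_2$ first, then treat the long chain times $r_3$ with a specific figure), which is not reached by ``zigzag the two long factors, then slab.'' Finally, your explanation of why the possibly-non-good odd family causes no trouble---that $E$ has length $\ge 2$ and ``absorbs the uncontrolled choices''---does not land on the actual reason, which is structural: in any cuboid with one maximal chain and two factors of length $\le 2$, the two short factors sit in two of the three families, hence at least one of them is good (only one family may be non-good), so the top/bottom orientation is taken from that good factor, and the remaining short factor is controlled by almost orthogonality alone. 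Without this observation and without a correct treatment of the $(m+1)\times(n+1)\times 1$ and $(m+1)\times(n+1)\times 2$ cuboids, the argument does not close.
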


As we will be working quite a lot with proper and very proper chains, Lemma 6.2 (which we call the Decomposition Lemma) will be frequently used in the rest of the paper. Note that the proof is more subtle than might initially appear, as the chains are in products of hypercubes as opposed to individual hypercubes, so the definition of (very) proper chains must be carefully considered.

\begin{lem}[Decomposition Lemma]
The following statements are true in a product of hypercubes of dimensions $\ge 2$.
\begin{enumerate}
\item The product of two proper chains of lengths $\ge 3$ has a proper decomposition.
\item The product of two very proper chains has a very proper decomposition.
\item The product of a very proper chain of length $\ge 3$ with a proper chain has a very proper decomposition.
\end{enumerate}

In particular we have the following.
\begin{itemize}
\item The product of a set which has a proper decomposition with a (very) proper chain of length $\ge 3$ has a (very) proper decomposition.
\item The product of a set which has a very proper decomposition with a very proper chain has a very proper decomposition.
\end{itemize}
\end{lem}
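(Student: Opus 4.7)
The plan is to fix a chain $\mathcal{C}$ in any decomposition of $C_1 \times C_2$ and, for each pair of distinct elements $(a, b), (a', b') \in \mathcal{C}$ with $a, a' \in C_1$ and $b, b' \in C_2$, verify the (very) proper condition by splitting into three cases according to whether $a = a'$, $b = b'$, or both differ. In each case, the only way the required good coordinate can fail to exist in the total ambient is for the offending sub-pair $(a, a')$ (resp.\ $(b, b')$) to be the minimum-and-maximum pair of the sub-ambient containing $C_1$ (resp.\ $C_2$), which, since $C_i$ is a chain, forces the corresponding elements to be the minimum and maximum of $C_i$ itself. This in turn either (i) puts $(a, b), (a', b')$ on the same column or row of the $|C_1| \times |C_2|$ rectangle, spanning it from bottom to top or from left to right, or (ii) makes $(a, b), (a', b')$ the global minimum and maximum of the total ambient.

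For claim (2), both $C_1, C_2$ are very proper, and so the failure cases above are ruled out in their respective sub-ambients; consequently \emph{any} symmetric chain decomposition of $C_1 \times C_2$ is very proper, with no structural input from the decomposition required. For claim (1), both chains are proper of length $\ge 3$, and I would apply the zigzag decomposition from Lemma 4.2. The salient property identified in the proof of Lemma 4.2---that no chain of the zigzag spans a full row or column of the rectangle---precisely kills the same-column and same-row failure cases, while the remaining case (the pair is the global min and max of the total ambient) is exactly the allowed exception in the definition of proper.

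For claim (3), I would combine these two observations. When $|C_2| \ge 3$, I apply zigzag as in (1); now the third configuration is also killed, because $C_1$ is very proper and therefore supplies a good coordinate in its sub-ambient even when $a, a'$ are its minimum and maximum. When $|C_2| \le 2$, the assumption that all ambient hypercube dimensions are $\ge 2$ forces the full minimum-to-maximum chain in the sub-ambient of $C_2$ to have length $\ge 3$, so $C_2$ cannot contain both its sub-ambient's minimum and maximum, and $C_2$ is in fact very proper---placing us back in the setting of (2). The two ``in particular'' consequences then follow by applying (1), (2), or (3) to each chain $S_i$ in the given (very) proper decomposition of the subset paired with the extra chain, using the same dimension $\ge 2$ observation to promote short chains in the decomposition to very proper as needed.

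The main obstacle is more conceptual than computational: one must carefully track how ``minimum/maximum of a sub-ambient'' matches up with ``minimum/maximum of the total ambient'' in the third case above, since this matchup is precisely what allows the global-min/max exception in the proper definition to absorb the one remaining potentially bad pair, and getting this right is what distinguishes claim (1) from claims (2) and (3).
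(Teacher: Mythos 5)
Your case analysis and the split of claim (3) according to whether the proper factor has length $\ge 3$ (apply the zigzag) or $\le 2$ (in which case the dimension $\ge 2$ hypothesis forces it to be very proper, reducing to claim (2)) is precisely the paper's argument, just spelled out in more detail than the paper's one-line proof. Your observation that very proper factors eliminate the need to control the decomposition in claim (2), and the discussion of how the zigzag kills the same-row/same-column obstructions, also matches the paper's reliance on Lemma 5.2 and the remark following Definition 4.1.
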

\begin{proof}
For the second item, we can take any symmetric chain decomposition. The first item follows from the zigzag decomposition, and the third follows from the zigzag decomposition when the proper chain has length $\ge 3$, and from the second item otherwise.
\end{proof}

We will also need the following modified version of the zigzag decomposition.
\begin{defn}
For $r,s \ge 3$, we call a decomposition of an $r \times s$ rectangle into the two modified chains from the zigzag decomposition, one proper of length $r+s-1$, and one very proper of length $r+s-3$, along with the remaining $(r-2) \times (s-2)$ rectangle, which is the product of two very proper chains, the \textit{partial zigzag decomposition}.
\end{defn}

There is a subtlety to Definition 6.3, as neither of the two sides of the remaining rectangle are symmetric, though no problems will arise as was noted in Remark 4.3.

Note that the hypothesis $k \ge 3$ in Theorem 6.1 forces $m,n,p \ge 4$ (see the Introduction). We shall need another assumption --- for emphasis, we display it below.

\begin{equation}
\begin{split}
\text{In our cuboid under consideration, all sides are made up of} 
\\\text{proper chains, and all sides of length }<5\text{ are very proper.}
\end{split}\tag{$\ast$}
\end{equation}

This assumption is true for example when the sides are proper chains inside (products of) hypercubes of dimension $\ge 4$, as is the situation for example in Theorem 6.1.

\begin{lem} A cuboid of size $r_1 \times r_2 \times r_3$ satisfying $(\ast)$ with $r_i \ge 3$ has a proper decomposition.
\end{lem}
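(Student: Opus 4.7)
The plan is a two-step application of the Decomposition Lemma (Lemma 6.2), building up the three-fold decomposition from a 2D one.

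First, I would apply Lemma 6.2(1) to the 2D sub-cuboid $C_1 \times C_2$. By hypothesis $r_1, r_2 \ge 3$, and by $(\ast)$ both $C_1$ and $C_2$ are proper chains, so the zigzag decomposition (Definition 5.1) furnishes a proper decomposition of $C_1 \times C_2$.

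Second, I would invoke the first ``in particular'' clause of Lemma 6.2: the product of a set with a proper decomposition and a proper chain of length $\ge 3$ has a proper decomposition. Taking the set to be $C_1 \times C_2$ (equipped with the proper decomposition from Step 1) and the chain to be $C_3$ (proper of length $r_3 \ge 3$), this directly yields the desired proper decomposition of $(C_1 \times C_2) \times C_3 = C_1 \times C_2 \times C_3$.

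The main point to verify carefully is that the ``in particular'' clause really does apply cleanly in our setting. The zigzag decomposition of $C_1 \times C_2$ can produce chains of length $1$ or $2$ whenever $|r_1 - r_2| \le 1$, and when $C_3$ happens to be a maximal chain in its ambient (i.e., proper but not very proper, which can occur when $r_3 \ge 5$), one must check that the product of such a short chain with $C_3$ remains proper in the total ambient $H_1 \times H_2 \times H_3$. The hypothesis $(\ast)$ --- demanding that sides of length $<5$ are very proper --- is designed precisely to handle this: in the problematic configurations one may instead invoke Lemma 6.2(2) or (3) to upgrade the decomposition of $C_1 \times C_2$ to a \emph{very} proper one, which then combines cleanly with $C_3$. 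This case analysis on which of $r_1, r_2, r_3$ are $<5$ (and correspondingly which $C_i$ are forced very proper) is the essential bookkeeping underlying Step 2.
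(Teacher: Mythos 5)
Your Step 1 and the overall plan of decomposing $C_1\times C_2$ first and then crossing with $C_3$ is reasonable, but Step 2 has a genuine gap that your proposed fix does not close, and it is exactly the case the paper handles with a different tool (the partial zigzag, Definition 6.3), which you do not mention.

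The problem is the case $r_1, r_2, r_3 \ge 5$ with all three chains only proper (not very proper, i.e.\ all maximal chains in $Q_{\ge 5}$). The zigzag of $C_1\times C_2$ can produce chains $D$ of length $1$ or $2$ (whenever $|r_1-r_2|\le 1$). For such a short $D$ and a merely-proper $C_3$, \emph{no} symmetric chain decomposition of $D\times C_3$ is proper: e.g.\ if $D=\{d_1<d_2\}$ and $C_3$ is the full chain $\varnothing<\cdots<[n_3]$, any SCD of the $2\times r_3$ rectangle has a long chain containing two elements $(d_i,\varnothing)$ and $(d_i,[n_3])$ differing only in the third coordinate by $\{\varnothing,[n_3]\}$, and since $d_i$ cannot simultaneously be the minimum and maximum of $Q_{n_1}\times Q_{n_2}$, this pair is not exempt. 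So the ``in particular'' clause of Lemma 6.2 simply cannot be applied to such a short chain. Your suggested remedy --- ``upgrade the decomposition of $C_1\times C_2$ to a very proper one via Lemma 6.2(2) or (3)'' --- fails twice over: items (2) and (3) require at least one of $C_1,C_2$ to be very proper, which does not hold when both are maximal; and even if the upgrade were available, a short chain in a very proper decomposition of $C_1\times C_2$, crossed with a merely-proper $C_3$, still has no proper SCD by the same computation.

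The paper's proof therefore splits into two cases. If some $r_i<5$, that factor is forced by $(\ast)$ to be very proper, and the paper crosses \emph{last} with that very proper chain, so that short chains from the first zigzag are handled by Lemma 6.2(2). If all $r_i\ge 5$, the paper uses the \emph{partial} zigzag of $r_1\times r_2$ (Definition 6.3), which produces only a proper chain of length $r_1+r_2-1$, a very proper chain of length $r_1+r_2-3$, and a residual $(r_1-2)\times(r_2-2)$ rectangle --- no short chains at all --- and then crosses each piece with $C_3$ via items (1), (3), and (2). The partial zigzag is the essential tool you are missing; without it, the case all $r_i\ge 5$ with maximal chains is not handled.
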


\begin{proof}
If we have some $r_i <5$ (say $r_1$), then the corresponding chain is very proper. By Decomposition Lemma 6.2, there is a proper decomposition of the product of the last two factors, and taking the product with the first factor then yields by Decomposition Lemma 6.2 again a (very) proper decomposition.

Suppose then all $r_i \ge 5$. Do a partial zigzag decomposition of $r_1 \times r_2$ into a proper chain of length $r_1+r_2-1$, a very proper chain of length $r_1+r_2-3$, and a rectangle of dimensions $(r_1-2) \times (r_2-2)$ (the product of two very proper chains of length $\ge 3$). Decompose the product of the proper and very proper chains with $r_3$ using Decomposition Lemma 6.2, and for the product of the rectangle with $r_3$, use Decomposition Lemma 6.2 twice.
\end{proof}

Now, let us consider possible cuboids in $\mathcal{F}_i \times \mathcal{G}_i\times \mathcal{H}_i$ that we need to decompose. If none of the three chains is of length 2 or 1, then Lemma 6.4 gives us a proper decomposition of the cuboid. If none of the chains is maximal, then we can decompose the cuboid arbitrarily to get a (very) proper decomposition. Lemmas 6.5, 6.6, and 6.7 properly decompose most of the remaining cuboids.

\begin{lem}
A cuboid of size $2 \times r_2 \times r_3$ satisfying $(\ast)$ with $r_i\ge 4$ has a very proper decomposition.
\end{lem}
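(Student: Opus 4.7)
The plan is to do a case analysis on whether $C_2$ and $C_3$ are very proper. \textbf{Easy case:} if at least one of $C_2, C_3$ is very proper (automatic by $(\ast)$ when the length is less than $5$, or more generally whenever that chain is non-maximal), say $C_2$, then item (2) of the Decomposition Lemma applied to $C_1 \times C_2$ (a product of two very proper chains) yields a very proper decomposition whose chains all have length at least $3$ (since $r_2 \ge 4$ makes the shorter of the two chains have length $r_2 - 1 \ge 3$). Applying item (3) of the Decomposition Lemma to each such very proper chain of length $\ge 3$ paired with $C_3$ (which is proper) then gives a very proper decomposition of the cuboid.

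\textbf{Hard case:} suppose both $C_2$ and $C_3$ are maximal, so $r_2, r_3 \ge 5$ and the min-max pair of $C_2 \times C_3$ is the trivially distinguished corner pair $((\varnothing, \varnothing), ([n], [p]))$. The obstruction is that any symmetric chain decomposition of $C_2 \times C_3$ has a longest chain $L$ containing this corner pair, and the only symmetric chain decompositions of the $2 \times |L|$ rectangle $C_1 \times L$ are top and bottom, both of which produce a long chain inheriting this corner pair as an internal bad pair. To overcome this, I would build the cuboid's longest chain $K^*$ (of length $r_2 + r_3$) directly as a ``three-dimensional zigzag'' crossing between the two $C_1$-slices at an interior switch point: pick $(c_j, d_k)$ with $1 \le j \le r_2 - 2$ and $1 \le k \le r_3 - 2$ (possible since $r_2, r_3 \ge 5$), let $K^*$ zigzag diagonally in the $a_1$-slice from $(a_1, \varnothing, \varnothing)$ to $(a_1, c_j, d_k)$, cross to $(b_1, c_j, d_k)$, and zigzag diagonally in the $b_1$-slice up to $(b_1, [n], [p])$. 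Since the switch point is strictly interior to $C_2 \times C_3$, one verifies directly that $K^*$ never contains any horizontal, vertical, diagonal, or anti-diagonal bad pair in either slice, so it is very proper.

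For the remaining $2r_2 r_3 - (r_2 + r_3)$ elements, I would invoke the partial zigzag of $C_2 \times C_3$: the pieces $C_1 \times M$ (with $M$ very proper of length $r_2 + r_3 - 3$) and $C_1 \times (\text{sub-rect})$ (a product of very proper chains) have very proper decompositions directly from the Decomposition Lemma. The leftover elements of $C_1 \times L$ that were not consumed by $K^*$ are then combined with chains of $C_1 \times M$ to form the remaining symmetric chains of the cuboid. The main technical obstacle is precisely this bookkeeping: one must match up the leftover elements of the only-proper rectangle $C_1 \times L$ with pieces from $C_1 \times M$ in such a way that each resulting chain is both rank-symmetric in the cuboid and simultaneously avoids every horizontal, vertical, diagonal, and anti-diagonal bad pair. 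This is feasible precisely because $r_2, r_3 \ge 5$ leaves enough interior room in the rectangle for the necessary rearrangements.
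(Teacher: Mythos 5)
Your easy case is fine: if one of $C_2, C_3$ is very proper, the Decomposition Lemma directly gives a very proper decomposition, and your computation of chain lengths $r_2+1$ and $r_2-1\ge 3$ is correct.

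The hard case is where the genuine content of this lemma lies, and your proposal leaves a real gap there. The construction of the cuboid's longest chain $K^*$ via a ``three-dimensional zigzag'' switching slices at an interior point $(c_j,d_k)$ is a nice idea, and your verification that $K^*$ is very proper is correct (the key observation being that neither slice's staircase segment reaches both $\varnothing$ and the top element of $C_2$ or $C_3$). However, the plan for decomposing the remaining $2r_2r_3-(r_2+r_3)$ elements does not hold together as described. You propose to peel off $C_1\times M$ and $C_1\times(\text{sub-rectangle})$ from a partial zigzag of $C_2\times C_3$ and then ``combine leftover elements of $C_1\times L$ with chains of $C_1\times M$''; but $K^*$ is generally not a subset of $C_1\times L$ (your staircase path from $(\varnothing,\varnothing)$ to $(c_j,d_k)$ and then to $([n],[p])$ need not agree pointwise with the zigzag chain $L$), so the notion of ``leftover elements of $C_1\times L$'' is not well-defined relative to $K^*$. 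Even granting some variant of this, the actual matching of those leftovers into rank-symmetric, very proper chains is precisely the hard part, and you explicitly punt on it. This is not a detail one can wave off in this setting: the very proper condition is fragile, and the whole point of Lemma~6.5 is to produce an explicit decomposition satisfying it.

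For comparison, the paper's proof is uniform (no case split) and more economical. It takes the bottom decomposition of the $2\times r_2$ rectangle $C_1\times C_2$, giving a short chain (very proper of length $r_2-1\ge 3$, hence $\times C_3$ handled by the Decomposition Lemma) and a long chain of length $r_2+1$. The long chain is \emph{not} in general proper, so the paper does not try to apply the Decomposition Lemma to it; instead it gives one explicit decomposition of (long chain)$\times C_3$ into symmetric chains — a modification of the rotated-L decomposition where only the three ``leftmost'' chains are altered — and reads off the three geometric features that make it very proper (the bottom-left $A$ is not on the same chain as the upper-right $B$; no chain connects a point on the left edge horizontally to the corresponding point on the right; no point on the bottom connects vertically to the corresponding point on the $B$--$B$ segment). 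You would need to supply an analogous explicit construction, or a different complete bookkeeping, to make your hard case go through.
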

\begin{proof}
Take the bottom decomposition of $2 \times r_2$, as depicted in Figure 3.

\begin{figure}[H]
\centering
\begin{tikzpicture}
\foreach \x in {0,1,...,5}{                           
    \foreach \y in {0,1}{
    \node[draw,circle,inner sep=2pt,fill] at (\x,\y) {}; 
    }
}
\draw[very thick] (0,0)--(5,0)--(5,1);
\draw[very thick] (0,1)--(4,1);
\draw (0,0) node [anchor=east] {A};
\draw (5,0) node [anchor=west] {B};
\draw (5,1) node [anchor=west] {C};
\end{tikzpicture}
\caption{Bottom Decomposition of $2 \times r_2$ rectangle with longer chain labeled}
\end{figure}
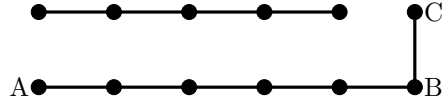

The smaller chain is very proper of length $\ge 3$, so taking the product with $r_3$ is handled by Decomposition Lemma 6.2. For the longer chain, we decompose the product with $r_3$ as in Figure 4. Only 3 chains were modified from a decomposition into clockwise rotated L's, namely the 3 ``leftmost'' chains. Graphically, the salient features that need to be checked in Figure 4 to ensure that we have a very proper decomposition are that the bottom left $A$ is not connected to the upper right $B$, that no chain connects a point on the left side horizontally to the corresponding point on the right side, and that no point on the bottom side connects vertically to the corresponding point on the line segment connecting the two $B$'s.

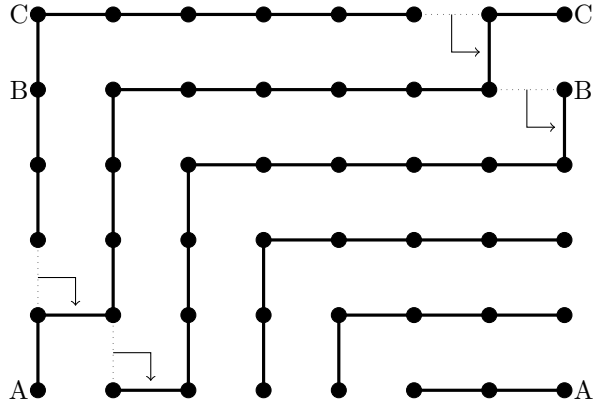
\begin{figure}[H]
\centering
\begin{tikzpicture}
\foreach \x in {0,1,...,7}{                           
    \foreach \y in {0,1,...,5}{                       
    \node[draw,circle,inner sep=2pt,fill] at (\x,\y) {}; 
    }
}
\draw[very thick] (0,2)--(0,5)--(5,5);
\draw[very thick] (0,0)--(0,1)--(1,1)--(1,4)--(6,4)--(6,5)--(7,5);
\draw[very thick] (1,0)--(2,0)--(2,3)--(7,3)--(7,4);
\draw[very thick] (3,0)--(3,2)--(7,2);
\draw[very thick] (4,0)--(4,1)--(7,1);
\draw[very thick] (5,0)--(7,0);
\draw[dotted, very thin] (0,1)--(0,2);
\draw[dotted, very thin] (1,0)--(1,1);
\draw[dotted, very thin] (5,5)--(6,5);
\draw[dotted, very thin] (6,4)--(7,4);
\draw (0,0) node [anchor=east] {A};
\draw (0,4) node [anchor=east] {B};
\draw (7,0) node [anchor=west] {A};
\draw (7,4) node [anchor=west] {B};
\draw (0,5) node [anchor=east] {C};
\draw (7,5) node [anchor=west] {C};
\draw[->, shorten >=0.125cm] (0,1.5)--(0.5,1.5)--(0.5,1);
\draw[->, shorten >=0.125cm] (1,0.5)--(1.5,0.5)--(1.5,0);
\draw[->, shorten >=0.125cm] (5.5,5)--(5.5,4.5)--(6,4.5);
\draw[->, shorten >=0.125cm] (6.5,4)--(6.5,3.5)--(7,3.5);
\end{tikzpicture}
\caption{Decomposition of (longer chain from Figure 3)$\times r_3$}
\end{figure}

\end{proof}

\begin{lem} A cuboid of size $2 \times 3 \times r_3$ satisfying $(\ast)$ with $r_3 \ge 3$ has a very proper decomposition.
\end{lem}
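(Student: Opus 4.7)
The plan is to invoke Decomposition Lemma 6.2 in two stages, taking advantage of the fact that condition $(\ast)$ forces both short sides of the cuboid to be very proper (since $2,3 < 5$), while the $r_3$-chain is only guaranteed to be proper.

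First, I would apply Lemma 6.2(3) to the sub-rectangle formed by the $3$-chain and the $r_3$-chain: the $3$-chain is very proper of length $\ge 3$ and the $r_3$-chain is proper with $r_3 \ge 3$, so their product admits a very proper decomposition. Under the hood this is just the zigzag decomposition of a $3 \times r_3$ rectangle, whose very properness in the ambient product follows from the fact that each chain in the zigzag avoids containing both endpoints of the $r_3$-chain in the same ``column''.

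Second, I would form the product of the $2$-chain with this very proper decomposition of $3 \times r_3$. Since the $2$-chain is itself a very proper chain, and the second ``in particular'' bullet of Lemma 6.2 does not impose any length requirement on the very proper chain being multiplied in, the result is a very proper decomposition of the whole $2 \times 3 \times r_3$ cuboid. The resulting chains are symmetric as chains in the total ambient product of hypercubes by the considerations in Remark 4.3, since all three factor chains skip no ranks.

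I do not expect any serious obstacle here: the delicate part of Lemma 6.5 came from one of its short sides ($r_2 \ge 4$) being only proper, which forced the explicit modified-zigzag construction of Figure 4. In the present situation both short sides are very proper by $(\ast)$, so the Decomposition Lemma alone handles the cuboid without any new hand-drawn figures; the only thing one needs to verify is that Lemma 6.2's ``in particular'' bullet really does accommodate a very proper chain of length $2$, which it does.
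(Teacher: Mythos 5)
Your proposal is correct and takes essentially the same route as the paper: apply the Decomposition Lemma to the $3 \times r_3$ factors to obtain a very proper decomposition, then multiply by the $2$-chain (very proper by $(\ast)$) and invoke the Decomposition Lemma once more. Your explicit tracking of which sub-clauses of Lemma~6.2 are being invoked, and the observation that the very proper chain of length $2$ is accommodated without a length hypothesis, matches what the paper's terse two-line proof leaves implicit.
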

\begin{proof}
Apply Decomposition Lemma 6.2 to the last two chains to get a very proper decomposition, then as the first chain is very proper, apply Decomposition Lemma 6.2 again.
\end{proof}

\begin{lem} A cuboid of size $1 \times r_2 \times r_3$ satisfying $(\ast)$, with $r_2, r_3 \ge 3$ and $r_2$ very proper, has a very proper decomposition.
\end{lem}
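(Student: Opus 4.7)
My plan follows the template of Lemma~6.6's proof and is a direct two-stage application of Decomposition Lemma~6.2. By hypothesis, $C_2$ is a very proper chain of length $r_2\ge 3$; by condition $(\ast)$, the chain $C_3$ of length $r_3\ge 3$ is proper, and is in fact very proper whenever $r_3<5$. I therefore invoke Decomposition Lemma~6.2 --- its item~(3) when $r_3\ge 5$, or its item~(2) when $r_3<5$ --- to obtain a very proper decomposition of the rectangle $C_2\times C_3$.

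For the second stage I tensor each chain of that decomposition with the singleton chain $C_1=\{p_1\}$. The key observation is that $C_1$ is itself \emph{vacuously} very proper in the sense of Definition~4.1: the very-properness condition ranges over pairs of distinct elements, and a one-element chain has none; and it trivially skips no ranks. Consequently, the second ``In particular'' bullet of Decomposition Lemma~6.2, applied to the set $C_2\times C_3$ (equipped with the very proper decomposition produced above) together with the very proper chain $C_1$, yields a very proper decomposition of $C_1\times C_2\times C_3$ inside the ambient product of hypercubes.

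I do not anticipate any real obstacle; the only point meriting explicit mention is the vacuous very-properness of the length-$1$ chain, which is what lets the second ``In particular'' bullet of Decomposition Lemma~6.2 apply despite $C_1$ being shorter than the threshold of length $3$ demanded by the first ``In particular'' bullet. As a minor sanity check, appending a constant first coordinate preserves symmetry, shifting every rank uniformly by $\operatorname{rk}(p_1)$, so the chains output by the procedure remain symmetric in $C_1\times C_2\times C_3$.
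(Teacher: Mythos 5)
Your proof is correct and follows the same approach as the paper, which simply applies Decomposition Lemma~6.2 to the last two factors; you make explicit the final step (lifting the very proper decomposition of $C_2\times C_3$ to one of $C_1\times C_2\times C_3$ via the vacuous very-properness of the singleton chain) that the paper leaves implicit. A minor simplification: since a very proper chain is in particular proper, item~(3) already covers both cases and the split on $r_3\ge 5$ versus $r_3<5$ is unnecessary.
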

\begin{proof}
Apply Decomposition Lemma 6.2 to the last two factors to get a very proper decomposition.
\end{proof}

The only cuboids we have not dealt with are those with one maximal chain and the other two of length $\le 2$, and the rectangles $(m+1) \times (n+1) \times 1$, $(m+1)\times 1 \times (p+1)$, and $1 \times (n+1) \times (p+1)$ if they exist. We note now that the conditions in Theorem 6.1 imply that any factor corresponding to an even dimensional hypercube is good.

Suppose we have an $(m+1) \times (n+1) \times 1$ cuboid. If we do a proper decomposition of the first two factors, then only the maximal chain is not very proper, the properness condition failing precisely at its top and bottom elements, so it could potentially intersect another chain from another cuboid in the two elements $(\varnothing, \varnothing,  \star)$ and $([m], [n], \star)$ (where $\star$ is the element in $Q_p$ which the $1$-element chain corresponds to). Of the aforementioned remaining cuboids we need to worry about, the only one which could possibly intersect in these two elements is another $(m+1) \times (n+1) \times 1$ cuboid, coming from one of the other triples of almost orthogonal symmetric chain decompositions. But because we have a $1$-element chain in the last factor, $Q_p$ must be a good even dimensional hypercube, so this $1$-element chain is different for the two cuboids, so the cuboids are disjoint.

We reason similarly for $(m+1)\times 1 \times (p+1)$ and $1 \times (n+1) \times (p+1)$, so all that is left to consider are the pairwise intersections of cuboids formed as the product of one maximal chain, and two chains of length $\le 2$.

Note that if the maximal chain is in a different factor for two such cuboids, then by the almost orthogonality of the decompositions, they intersect in size at most $1$, so we only have to deal with the intersection properties of those cuboids with the maximal chain in the same factor. This then fixes the dimensions of the cuboids under consideration to be the same.

If one of the factors is length $1$, then that factor corresponds to an even dimensional, hence good, hypercube, so all such cuboids are disjoint and there's nothing to check. Otherwise, such a cuboid has at least one of its two length $2$ factors in a good hypercube, say $Q_n$ (we only use goodness in this factor now). By symmetry, the only case we have to deal with are cuboids of size $2 \times 2 \times (p+1)$.

For these cuboids, ignore the $Q_m$ factor and decompose the last two factors using top and bottom decompositions like in Section 3 (using the goodness of $Q_n$), so that we're left with rectangles of the form $2 \times C$, with $C$'s corresponding to different $\mathcal{G}_i \times \mathcal{H}_i$ having intersections of size at most $1$. Then any decompositions of the $2 \times C$ rectangles work because $\mathcal{F}_i$ is an almost orthogonal family.

\section{Product of four odd hypercubes, two good}
The goal of this section is to prove Theorem 7.1.
\begin{thm}Take $m,n,p,q \ge 3$ odd. Suppose $Q_m$, $Q_n$, $Q_p$, and $Q_q$ have families of $k \ge 3$ almost orthogonal symmetric chain decompositions $\mathcal{F}_i$, $\mathcal{G}_i$, $\mathcal{H}_i$, and $\mathcal{K}_i$, and two of these families are good. Then we can construct $k$ almost orthogonal symmetric chain decompositions of $Q_{m+n+p+q}$ by decomposing the cuboids in $\mathcal{F}_i\times \mathcal{G}_i \times \mathcal{H}_i \times \mathcal{K}_i$ into symmetric chains.
\end{thm}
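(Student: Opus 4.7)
The plan mirrors the strategy of Section 6 in the four-factor setting: use Decomposition Lemma 6.2 together with a four-dimensional analog of Lemma 6.4 (obtained by iterating partial zigzag on two factors and multiplying in the rest) to produce (very) proper decompositions of most cuboids $A \times B \times C \times D$ in $\prod_i \mathcal{F}^j_i$, which by Lemma 4.2 automatically interact correctly across $i$, and handle the remaining problematic cuboids using goodness of the two good families. A case analysis on the number of maximal factors shows that only one shape resists Lemma 6.2: \emph{three maximal chains plus one length-$2$ chain.} With zero maximals all factors are very proper and iterating Lemma 6.2(2) works; with one or two maximals I would pair off the very proper factors first (length-$2$'s included) via Lemma 6.2(2) and then multiply in the maximal(s) using the ``in particular'' items to land on a proper decomposition; with four maximals every factor is proper of length $n_i+1 \ge 4$ and iterating Lemma 6.2(1) works; with three maximals and one non-maximal of length $\ge 3$, Lemma 6.2(3) applies. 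The problematic cuboids are thus of shape $2 \times (n+1) \times (p+1) \times (q+1)$ and its three permutations, where multiplying a proper $3$D set by a length-$2$ chain falls outside the scope of the in-particular items.

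For such a problematic cuboid, say $A \times M_2 \times M_3 \times M_4$ with $A \in \mathcal{F}_i$ length-$2$ and the $M_\ell$ maximal, almost orthogonality forces $|A_i \cap A_j| \le 1$ and $|M_{\ell,i} \cap M_{\ell,j}| = 2$, so two such cuboids for $i \ne j$ share either $0$ elements or exactly the $8$ corners $\{a\} \times \{\varnothing,[n]\} \times \{\varnothing,[p]\} \times \{\varnothing,[q]\}$ with $a = A_i \cap A_j$. I would decompose each problematic cuboid by first proper-decomposing the $3$D cube $M_2 \times M_3 \times M_4$ via Lemma 6.4 (applicable since each side has length $\ge 4$), then multiplying each resulting chain $C$ by $A$ and top/bottom-decomposing the $2 \times |C|$ rectangle. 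For every $C$ other than the $3$D maximal chain $\widetilde M$, the resulting $4$D chains interact with any other cuboid's chains in $\le 1$ element regardless of the top/bottom choice. The delicate step is the $2 \times (n+p+q+1)$ rectangle $A \times \widetilde M$, which contains four of the shared $8$ corners at the four corners of the rectangle: here the top vs.\ bottom choice decides how these four corners distribute into the rectangle's two symmetric chains. When the length-$2$ factor's family $\mathcal{F}$ is good, the orientation on $\mathcal{F}$'s graph of $2$-element chains provided by Lemma 5.4 selects the top/bottom choice consistently across all $i$, exactly as in the $2 \times 2 \times (p+1)$ argument of Section 6, so that chains across $i$ meet in at most one element at the shared corners.

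The main obstacle is the sub-case where the length-$2$ factor lies in a non-good family, which is unavoidable for some permutation of the problematic shape since only two of the four families are good. Here goodness concerns the $2$-element chains of $\mathcal{G}$, $\mathcal{H}$, or $\mathcal{K}$, none of which appear in the cuboid (whose $\mathcal{G}$-, $\mathcal{H}$-, $\mathcal{K}$-factors are all maximal chains), so goodness must be exploited indirectly. The approach I anticipate is to refine the decomposition of the $3$D cube so that its maximal chain $\widetilde M$ passes through a $2 \times 2$ sub-rectangle built from the $\{\varnothing,[n_\ell]\}$-endpoints of a good factor $\mathcal{G}_\ell$, then to top/bottom-decompose the rectangle $A \times \widetilde M$ in two coordinated stages --- one stage using the orientation from $\mathcal{G}_\ell$'s goodness on the endpoint sub-rectangle, and one propagating the choice outward along $\widetilde M$. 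The verification that no two chains across $i$ meet in more than one element reduces to a matching-style analysis on the union graph of $\mathcal{F}$'s $2$-element chains, which the two available orientations from the good families are just sufficient to control.
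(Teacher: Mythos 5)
The central difficulty in your proposal is that your case analysis misidentifies which cuboids resist proper decomposition, and the misidentification stems from an overreach in applying the ``in particular'' clauses of Lemma 6.2. For a cuboid with one maximal and three length-$2$ factors, say $A_1 \times A_2 \times A_3 \times M_4$, you propose to very-properly decompose $A_1 \times A_2 \times A_3$ first (chains of lengths $4,2,2$) and then multiply by $M_4$ via the first ``in particular'' item. But that item silently requires the chains in the set's decomposition to behave; here a length-$2$ chain $D$ (lying in the interior of $A_1\times A_2\times A_3$, hence at non-extremal ranks) times the maximal $M_4$ is a $2 \times (q+1)$ rectangle whose only two symmetric chain decompositions (top and bottom) each produce a long chain containing a pair of the form $(d,\varnothing_q)$ and $(d,[q])$ for a fixed $d \in D$. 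This pair differs only in the $Q_q$-coordinate by the extremes $\{\varnothing,[q]\}$, and since $d$ is not the global min or max of the whole product, the proper exemption does not apply. So $D\times M_4$ admits no proper decomposition, and your ``pair off, then multiply'' plan collapses for one or two maximal factors. The same collapse occurs for $2 \times 2 \times M_3 \times M_4$ via the length-$1$ chain in the decomposition of $2\times 2$. Thus the genuinely recalcitrant cuboids are $2\times 2\times 2\times(q+1)$ and its permutations --- three length-$2$'s and one maximal --- not $2 \times (n+1)\times(p+1)\times(q+1)$, which you single out but which in fact admits a very proper decomposition once the right tools are in hand.

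Those tools are precisely what the paper builds and you omit: the partial zigzag decomposition (Definition 6.3) and the explicit very proper decomposition of $2 \times r_2 \times r_3$ with $r_i\ge 4$ (Lemma 6.5, with its Figures 3--4). Lemma 7.3 uses a partial zigzag of $t \times u$ and then Lemma 6.5 on $2 \times s \times (\text{chain})$ to very-properly decompose $2\times(n+1)\times(p+1)\times(q+1)$; Lemma 7.4 uses Lemma 6.5 directly for $2\times 2\times(p+1)\times(q+1)$. The lesson is that a length-$2$ factor must be kept ``alive'' alongside the maximal chains during the decomposition, not stripped off beforehand. The goodness hypothesis is then invoked only for the genuinely bad cuboids $2\times 2\times 2\times(q+1)$: since two of the four families are good and three of the four factors here are length-$2$, at least one length-$2$ factor, say the $Q_p$-factor, lies in a good family, and top/bottom decompositions of $2\times(q+1)$ (the $Q_p\times Q_q$ slice) coordinated by the goodness orientation reduce the whole cuboid to pieces of shape $2\times 2\times C$, whose interactions are then governed by almost orthogonality of the first two families. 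Your goodness argument for $2\times(n+1)\times(p+1)\times(q+1)$ would have had to use $\mathcal{F}$'s graph, and you correctly worry that $\mathcal{F}$ may not be good --- but the sketched fallback of threading a $2\times 2$ sub-rectangle of a good factor's endpoints through $\widetilde M$ is not worked out and does not appear recoverable; fortunately it is also not needed once the case analysis is corrected.
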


Note that as before, $k \ge 3$ implies $m,n,p,q \ge 4$, so $(\ast)$ holds for all cuboids we consider. Lemmas 7.2, 7.3, 7.4, 7.5, and 7.6 allow us to decompose most of the cuboids in the product in a proper way.

\begin{lem}
A cuboid of size $r \times s \times t \times u$ satisfying $(\ast)$, with $r \ge 3$ very proper, $s,t,u \ge 2$, has a very proper decomposition. 
\end{lem}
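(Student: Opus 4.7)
We split on how many of $s,t,u$ fail to be very proper. By $(\ast)$, each of $s,t,u$ is proper, and any chain of length $<5$ is very proper; write $V \subseteq \{s,t,u\}$ for the very proper chains among them and $P$ for the remainder, so every chain in $P$ has length $\ge 5 \ge 3$.

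The plan is to first decompose the product of $r$ with the factors in $P$ very properly, and then attach the remaining very proper factors in $V$ one at a time. The second step relies on the ``in particular'' clause of Decomposition Lemma 6.2 stating that multiplying a set with a very proper decomposition by a very proper chain (with no length constraint) preserves very properness; this clause is crucial because chains in $V$ may have length as small as $2$, preventing direct invocation of Lemma 6.2(3) against them.

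For the first step we case on $|P|$. If $|P|=0$ there is nothing to do and we enter the second step with $r$ by itself. If $|P|=1$, say $P=\{s\}$, then Lemma 6.2(3) applied to $r$ (very proper, length $\ge 3$) and $s$ (proper) produces a very proper decomposition of $r \times s$. If $|P|=2$, say $P=\{s,t\}$, then Lemma 6.2(1) produces a proper decomposition of $s \times t$ (both factors proper of length $\ge 3$), and the first ``in particular'' clause of Lemma 6.2 applied with $r$ (very proper, length $\ge 3$) upgrades this to a very proper decomposition of $r \times s \times t$. If $|P|=3$, Lemma 6.4 yields a proper decomposition of $s \times t \times u$ (all sides have length $\ge 5$ and $(\ast)$ holds on the sub-cuboid), and the same ``in particular'' clause combined with $r$ produces a very proper decomposition of $r \times s \times t \times u$. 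In each of these cases, we then finish the second step by successively multiplying by the remaining very proper chains in $V$, invoking the second ``in particular'' clause one factor at a time.

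The main obstacle is purely bookkeeping: ordering the multiplications so that the correct clause of Lemma 6.2 applies at each step. The organizing principle is that $r$ is the only chain guaranteed to be both very proper and of length $\ge 3$, so $r$ is the natural tool for ``upgrading'' proper decompositions to very proper, whereas the very proper chains in $V$ (which may have length $2$) must only be introduced once we already have a very proper decomposition in hand. No new geometric idea beyond the zigzag/partial zigzag constructions already present in Lemmas 6.2 and 6.4 is required.
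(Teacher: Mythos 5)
Your proof is correct. It takes a genuinely different and arguably cleaner route than the paper's. The paper's proof of Lemma 7.2 first asks whether the sub-cuboid $s\times t\times u$ has a proper decomposition, invoking Lemmas 6.4, 6.5, and 6.6, and observes that the only case they miss is $s=t=2$, $u\ge 2$, which it then handles ad hoc by pairing $r$ with $u$ and multiplying in the two very proper factors of length~$2$. Your approach instead reorganizes the casework around which of $s,t,u$ are guaranteed very proper under $(\ast)$, so that the ``hard'' factors in $P$ all have length $\ge 5$ and can be handled uniformly by Lemma~6.2(1) or Lemma~6.4, while the ``easy'' very proper factors in $V$ are appended last via the second ``in particular'' clause. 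This avoids any appeal to Lemmas~6.5 and~6.6 (and hence to the explicit constructions in Figures~3 and~4), relying only on the Decomposition Lemma and Lemma~6.4. The paper's route is shorter on the page because Lemmas~6.5 and~6.6 are already available from the preceding section; yours is more self-contained and makes the role of $r$ as the ``upgrader'' explicit, which is a useful organizing observation that recurs implicitly in later lemmas of the paper.
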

\begin{proof}
If the $s \times t \times u$ cuboid has a proper decomposition, then we're done by Decomposition Lemma 6.2. From Lemmas 6.4, 6.5, and 6.6, the only case we have left is when (up to permutation), $s=t=2$ and $u \ge 2$. But then the $r\times u$ cuboid has a very proper decomposition by Decomposition Lemma 6.2, so since the other two factors are very proper, we're done by applying Decomposition Lemma 6.2 two more times.

\end{proof}

\begin{lem}
A cuboid of size $2 \times s \times t \times u$ satisfying $(\ast)$, $s,t,u \ge 5$ has a very proper decomposition.
\end{lem}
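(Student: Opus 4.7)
I would prove this by a case analysis on whether any of the chains $s$, $t$, $u$ is non-maximal in its ambient hypercube.

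\emph{Easy case.} If at least one of the three --- say, WLOG, the $s$-chain --- is not maximal in $Q_n$, then its two endpoints sit at ranks strictly between $0$ and $n$, so the $s$-chain is automatically very proper of length $s \ge 5 \ge 3$. Permuting factors to place the $s$-chain first, the cuboid reads $s \times 2 \times t \times u$ with $s$ playing the role of the ``$r \ge 3$ very proper'' factor and the remaining three sides each of length $\ge 2$. Lemma~7.2 applies directly and furnishes a very proper decomposition.

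\emph{Hard case.} Otherwise all three of $s$, $t$, $u$ are maximal chains. I would apply the bottom decomposition to the rectangle $2 \times s$, splitting the cuboid into $C_s \times t \times u$ and $C_l \times t \times u$, where $C_s$ is the smaller chain of length $s-1 \ge 4$ and $C_l$ the longer chain of length $s+1 \ge 6$. Since $C_s$ lies on the top row of the bottom decomposition minus the top-right corner, its $n$-coordinates avoid the maximum element $[n]$; its endpoint pair $\{\varnothing_n, b_{s-2}\} \ne \{\varnothing_n, [n]\}$, and so $C_s$ is very proper. Invoking the final bullet of Decomposition Lemma~6.2 with a proper zigzag decomposition of $t \times u$ then yields a very proper decomposition of $C_s \times t \times u$.

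The piece $C_l \times t \times u$ is the heart of the hard case. Because $C_l$ inherits the bad pair $(x_0, \varnothing_n)$, $(x_0, [n])$ from the maximality of the $s$-chain, $C_l$ is proper but not very proper, so the final bullet of Decomposition Lemma~6.2 is not immediately available. I would apply partial zigzag to the rectangle $C_l \times t$, producing a very proper chain of length $s+t-2 \ge 11$, a product of two very proper chains each of length $\ge 3$, and a single proper chain $P$ of length $s+t$. Crossing each piece with $u$: the first two admit very proper decompositions by successive applications of Decomposition Lemma~6.2 items (2), (3) and the bullets. The last piece $P \times u$ is the main obstacle: the bad pair propagates into $P$, and iterating partial zigzag only concentrates (but never eliminates) the offending pair in a single ``spine'' chain. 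A tailor-made symmetric chain decomposition of the $(s+t) \times u$ rectangle --- modelled on the Figure~4 construction in the proof of Lemma~6.5, and arranged so that the propagated elements $(x_0, \varnothing_n, t_j, u_k)$ and $(x_0, [n], t_j, u_k)$ lie in different chains --- is then required to complete the proof.
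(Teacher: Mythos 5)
Your easy case is fine: a non-maximal chain in a single hypercube cannot contain both $\varnothing$ and $[n]$, hence is very proper, and Lemma~7.2 then applies directly. The hard case, however, contains both a conceptual error and an acknowledged gap.

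The error is the claim that $C_l$ is ``proper but not very proper.'' The long chain $C_l$ from the bottom decomposition of $2\times s$ passes through $(x_0,\varnothing_{n_2})$ and $(x_0,[n_2])$, and these two elements differ \emph{only} in the second coordinate, where they form the forbidden pair $\{\varnothing_{n_2},[n_2]\}$. By Definition~4.1, a proper chain is allowed to fail this condition \emph{only} for the minimum and maximum of the ambient product $Q_{n_1}\times Q_{n_2}$, i.e.\ for $(\varnothing_{n_1},\varnothing_{n_2})$ and $([n_1],[n_2])$. Since $x_0\neq\varnothing_{n_1},[n_1]$, the offending pair in $C_l$ is an interior pair, so $C_l$ is not even proper. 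This breaks the rest of the argument: the hypotheses of Decomposition Lemma~6.2 and the stated properties of the partial zigzag pieces of $C_l\times t$ are no longer satisfied, and the ``bad pair'' you are tracking gets placed incorrectly. On top of that, you explicitly leave the $P\times u$ piece as a construction ``required to complete the proof'' without exhibiting one, so even granting the earlier steps the argument is incomplete.

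The paper sidesteps this entirely by never splitting the $2\times s$ rectangle. Instead it applies the partial zigzag to $t\times u$, obtaining a proper chain of length $t+u-1$, a very proper chain of length $t+u-3$, and a $(t-2)\times(u-2)$ rectangle of very proper chains, and then crosses each piece with the \emph{entire} $2\times s$ factor. The resulting three-dimensional cuboids of sizes $2\times s\times(\text{piece})$ are handled by Lemmas~6.5 and~6.6 (and by Decomposition Lemma~6.2 for the rectangle piece). The tailored ``Figure~4'' construction you suspect is needed is exactly what the proof of Lemma~6.5 already provides, precisely to absorb the non-very-proper long chain of a bottom decomposition; invoking Lemma~6.5 as a black box is what makes the paper's argument succeed where the order of operations in yours does not.
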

\begin{proof}
Take $t \times u$ and do a partial zigzag decomposition, to get a proper chain of length $t+u-1$, a very proper chain of length $t+u-3$, and a rectangle of dimension $(t-2) \times (u-2)$, (the product of two very proper chains of length $\ge 3$). For the chains, take the product with $2 \times s$ and use one of Lemmas 6.5 or 6.6. For the rectangle, use Decomposition Lemma 6.2 on $s \times (t-2)$ to get a very proper decomposition, then since the other two factors are very proper, we're done by applying Decomposition Lemma 6.2 two more times.

\end{proof}

\begin{lem}
A cuboid of size $2 \times 2 \times t \times u$ satisfying $(\ast)$, $t,u \ge 5$ has a very proper decomposition.
\end{lem}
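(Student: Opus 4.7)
The plan is to reduce to the three-factor situation already handled by Lemma 6.5 and then glue on the remaining two-element chain using the Decomposition Lemma. Explicitly, single out one of the two length-$2$ factors and group the cuboid as $(2 \times t \times u) \times 2$. The three-factor sub-cuboid $2 \times t \times u$ satisfies $(\ast)$ (the side of length $2$ is inherited as very proper, and the sides of lengths $t,u \ge 5$ are inherited as proper), and since $t,u \ge 5 \ge 4$, Lemma 6.5 furnishes a very proper decomposition of it.

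Now the remaining factor is a chain of length $2 < 5$, which by $(\ast)$ is very proper. Applying the second bullet of Decomposition Lemma 6.2 (product of a set with a very proper decomposition and a very proper chain has a very proper decomposition) to the very proper decomposition of $2 \times t \times u$ and the very proper $2$-chain yields a very proper decomposition of the full $2 \times 2 \times t \times u$ cuboid.

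There is essentially no obstacle in this argument once one notices that Lemma 6.5 already carries the burden of the two "small" sides being length $2$ against the long ones. The only thing worth flagging is the order of operations: one must eat the length-$2$ chains one at a time, first via Lemma 6.5 (which is designed precisely for the $2 \times (\ge 4) \times (\ge 4)$ case) and then via the bullet point of Lemma 6.2. A naive attempt to handle the $2 \times 2$ block first and then multiply by $t \times u$ would run into trouble because $t \times u$ only has a proper — not very proper — decomposition in general (Lemma 6.2(1)), so we would fail to propagate the very-properness across the final product. Grouping the factors as above avoids this entirely.
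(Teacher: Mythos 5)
Your argument is correct and is essentially identical to the paper's: apply Lemma 6.5 to a $2 \times t \times u$ sub-cuboid and then absorb the remaining very proper $2$-chain via the second bullet of Decomposition Lemma 6.2. The extra remark about why one should not pair the two $2$'s first is a nice observation but not needed; the paper's proof does the same grouping.
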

\begin{proof}
By Lemma 6.5, $2 \times t \times u$ has a very proper decomposition, and since the first $2$ is very proper, we're done by Decomposition Lemma 6.2.
\end{proof}

\begin{lem}
A cuboid of size $2 \times 2 \times 2 \times 2$ satisfying $(\ast)$ has a very proper decomposition.
\end{lem}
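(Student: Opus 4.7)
The plan is to observe that, under $(\ast)$, essentially no construction is required: any symmetric chain decomposition of the cuboid is automatically very proper. First I would note that since each side has length $2 < 5$, assumption $(\ast)$ forces each side to be very proper; writing the $i$-th side as $\{x_i < y_i\} \subset Q_{n_i}$, this says precisely that $\{x_i, y_i\} \ne \{\varnothing, [n_i]\}$ for each $i \in \{1,2,3,4\}$.

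Next I would take any symmetric chain decomposition of the cuboid. The cuboid is isomorphic via the coordinate identification $x_i \leftrightarrow 0$, $y_i \leftrightarrow 1$ to $Q_4$, so such decompositions exist (e.g.\ the standard one, consisting of one chain of length $5$, three of length $3$, and two of length $1$, totaling $\binom{4}{2} = 6$). Because the ambient rank of any element of the cuboid exceeds its intrinsic cuboid rank by the constant $\sum_i (n_i-1)/2$, a chain is symmetric with respect to the cuboid if and only if it is symmetric with respect to the ambient $Q_{m+n+p+q}$, so the chains produced are genuine symmetric chains in the ambient.

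Finally I would verify very properness. Given any chain $C$ in the decomposition and distinct $a, a' \in C$, there must exist a coordinate $i$ with $a_i \ne a'_i$; but $a_i, a'_i \in \{x_i, y_i\}$ forces $\{a_i, a'_i\} = \{x_i, y_i\}$, and the first paragraph gives $\{x_i, y_i\} \ne \{\varnothing, [n_i]\}$. Hence the decomposition is very proper. There is really no obstacle to confront here; the content of the lemma is just the observation that length-$2$ sides collapse the very proper condition for chains of the cuboid directly onto the already-assumed very properness of the sides in $(\ast)$, so no zigzag decomposition or appeal to the Decomposition Lemma is needed. This is precisely why this case is separated out from Lemmas 7.2--7.5: it is the only residual configuration after those lemmas, and it is handled trivially.
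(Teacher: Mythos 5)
Your proof is correct and reaches the same conclusion as the paper --- that any symmetric chain decomposition of the $2\times2\times2\times2$ cuboid is automatically very proper --- but by a slightly different route. The paper simply notes that all four sides are very proper chains and then invokes the Decomposition Lemma 6.2 (item 2 and the second bullet) three times; you instead unwind the definition directly and observe that for \emph{any} pair of distinct elements $a,a'$ in the cuboid, every coordinate $i$ where they differ already satisfies $\{a_i,a'_i\}=\{x_i,y_i\}\ne\{\varnothing,[n_i]\}$, so very properness holds automatically, with no appeal to the Decomposition Lemma. Your version makes it transparent that very properness is forced by the sides having length $2$, whereas the paper's phrasing folds this into its general lemma machinery (whose proof of item 2 is, underneath, exactly your observation). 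One remark: the paragraph about ambient rank versus intrinsic rank is not logically needed for the definition of a very proper \emph{decomposition}, since Definition 4.1 only requires symmetry with respect to the subset; it is the kind of consistency check Remark 4.3 is about, so it does no harm, but you could omit it.
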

\begin{proof}
All of the chains are very proper, so we apply Decomposition Lemma 6.2 repeatedly to get our result.
\end{proof}

\begin{lem}
A cuboid of size $r \times s \times t \times u$ satisfying $(\ast)$, $r,s,t,u \ge 5$ has a very proper decomposition.
\end{lem}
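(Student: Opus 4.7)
The plan is to iterate the partial zigzag construction. First apply partial zigzag to $r\times s$, partitioning that rectangle into a proper chain $P_1$ of length $r+s-1$, a very proper chain $V_1$ of length $r+s-3\ge 7$, and a sub-rectangle $R_1$ of dimensions $(r-2)\times(s-2)$ that is the product of two very proper chains of length $\ge 3$. Apply the analogous partial zigzag to $t\times u$ to obtain $P_2$, $V_2$, $R_2$. The cuboid now decomposes as the disjoint union of nine product pieces $X_1\times X_2$ with $X_i\in\{P_i,V_i,R_i\}$, and a very proper decomposition must be produced for each.

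Eight of the nine pieces admit very proper decompositions by routine application of Decomposition Lemma 6.2: $V_1\times V_2$ by 6.2(2); $V_1\times P_2$ and $P_1\times V_2$ by 6.2(3), since $V_i$ is very proper of length $\ge 3$; and $V_1\times R_2$, $R_1\times V_2$, $R_1\times R_2$ by first decomposing each $R_j$ via 6.2(2) into very proper chains and then re-applying 6.2. For $P_1\times R_2$ (and symmetrically $R_1\times P_2$), view it as the 3D cuboid $P_1\times(t-2)\times(u-2)$: if $t\le 6$ or $u\le 6$ the corresponding interior side has length $<5$ and is very proper, so the small-side case of Lemma 6.4 applies to yield a very proper decomposition; if $t,u\ge 7$ apply partial zigzag inside $(t-2)\times(u-2)$ and observe that its longest chain is itself very proper, because both $(t-2)$ and $(u-2)$ are very proper so the extremal pair of the sub-rectangle inherits a good coordinate, then pair with $P_1$ using 6.2(3) and the last bullet of 6.2.

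The main obstacle is the remaining piece $P_1\times P_2$, where both factors are only known to be proper. I would apply partial zigzag a third time to this $(r+s-1)\times(t+u-1)$ rectangle, producing a sub-rectangle $R_{12}$ that is a product of two very proper chains of length $\ge 7$, a very proper middle chain $V_{12}$, and a longest diagonal chain $P_{12}$ from $(\min P_1,\min P_2)$ to $(\max P_1,\max P_2)$. The first two are handled by Lemma 6.2. The delicate step is verifying that $P_{12}$ is very proper in the four-dimensional ambient: the $L$-shape of $P_{12}$ never traverses an entire row or column of the $P_1\times P_2$ rectangle, so the only pair of distinct elements whose $P_1$-parts are $\{\min P_1,\max P_1\}$ and whose $P_2$-parts are $\{\min P_2,\max P_2\}$ is the single corner-to-corner pair. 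For every other pair, either the $P_1$-parts or the $P_2$-parts are non-extremal in their respective sub-ambients, so the properness of $P_1$ or $P_2$ supplies a good coordinate. Ruling out the corner-to-corner pair as very bad across the full four-dimensional ambient, and treating carefully the degenerate configuration in which all four of the chains $r,s,t,u$ are simultaneously maximal in their hypercubes, is the most technically involved part of the argument.
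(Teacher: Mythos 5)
Your route differs substantially from the paper's, and at the crucial step it has a gap that you yourself flag but do not close.

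The paper does a single partial zigzag, on $r\times s$ only, producing a proper chain $P_1$, a very proper chain $V_1$, and the rectangle $(r-2)\times(s-2)$. The two chains are then paired with the \emph{whole} 2D rectangle $t\times u$, and Lemma 6.4 (the three-factor case, all sides $\ge 3$ under $(\ast)$) immediately gives a proper decomposition of each of $P_1\times t\times u$ and $V_1\times t\times u$; the remaining $(r-2)\times(s-2)\times t\times u$ is handled by applying Decomposition Lemma 6.2 to $t\times u$ and then twice more with the very proper chains $r-2$ and $s-2$. By never applying partial zigzag to $t\times u$, the paper never creates a $P_1\times P_2$ piece at all, so the difficulty you isolate simply does not arise.

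Your proposal instead applies partial zigzag to both $r\times s$ and $t\times u$ (and again to $P_1\times P_2$), producing nine pieces, and the burden falls on $P_1\times P_2$ where both factors are only proper. You correctly observe that the corner-to-corner pair of the diagonal chain $P_{12}$, together with the configuration where all four of $r,s,t,u$ are maximal chains in their hypercubes, is ``the most technically involved part,'' but you leave it unresolved. In fact, in that configuration the cuboid contains both $\varnothing$ and the top element of $Q_{n_1+n_2+n_3+n_4}$, so no symmetric chain decomposition of it can be very proper: the chain through the top and bottom necessarily has every coordinate pair equal to $\{\varnothing,[n_i]\}$. Only a \emph{proper} decomposition is achievable in that case, which is what the paper's proof actually produces and what Lemma 8.2 later cites this lemma for (the four-factor base case of a \emph{proper} decomposition). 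The statement's ``very proper'' is best read as ``proper, and very proper whenever not all four factors are maximal''; not recognizing this is what sent you chasing an impossible strengthening. If you simply apply Decomposition Lemma 6.2(1) to $P_1\times P_2$ to get a \emph{proper} decomposition, your nine-piece plan closes up into a correct (if considerably longer) argument for the proper version, but as written the treatment of $P_1\times P_2$ is incomplete.
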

\begin{proof}

Take $r\times s$, and do a partial zigzag decomposition, to get a proper chain of length $r+s-1$, a very proper chain of length $r+s-3$, and a rectangle of dimension $(r-2)\times (s-2)$ (the product of two very proper chains of length $\ge 3$). For the chains, take the product with $t\times u$ and use Lemma 6.4. For the rectangle, use Decomposition Lemma 6.2 on $t \times u$ to get a proper decomposition, then since $r-2$ and $s-2$ are very proper of length $\ge 3$, we're done by applying Decomposition Lemma 6.2 two more times.

\end{proof}

The only cuboids we haven't created proper decompositions for are ones with three factors of length 2, and one factor maximal. Two such cuboids can intersect in more than one element only if the maximal chain is in the same factor. We will show how to handle $2 \times 2 \times 2 \times (q+1)$ cuboids, the other cases are similarly dealt with. As two of the decompositions are good, without loss of generality assume $Q_p$ (the third factor) is good.

For $2 \times 2 \times 2 \times (q+1)$, ignore the first two factors and decompose the last two factors using top and bottom decompositions like in Section 3 (using the goodness of $Q_p$), so that we're left with cuboids of the form $2 \times 2 \times C$, with $C$'s corresponding to different $\mathcal{H}_i \times \mathcal{K}_i$ having intersections of size at most $1$. Then any decompositions of these $2 \times 2 \times C$ cuboids will work because $\mathcal{F}_i$ and $\mathcal{G}_i$ are almost orthogonal families.

\section{Arbitrarily many good even hypercubes}
We now consider the product of arbitrarily many good even hypercubes.
\begin{thm}
For $1 \le i \le r$, suppose we have $k \ge 3$ good almost orthogonal symmetric chain decompositions $\mathcal{F}^j_i$ of $Q_{n_i}$, all $n_i \ge 2$ even. Then we can construct $k$ almost orthogonal symmetric chain decompositions for $Q_{n_1+\ldots+n_r}$ by decomposing the cuboids in $\prod_i \mathcal{F}^j_i$ into symmetric chains.
\end{thm}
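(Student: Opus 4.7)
The plan is to decompose each cuboid $\prod_i C_i^j$ appearing in $\prod_i \mathcal{F}_i^j$ and verify pairwise almost orthogonality, in the same style as Theorems 6.1 and 7.1. The middle-rank upper bound $k \le \lfloor n/2 \rfloor + 1$ applied to each factor, together with $k \ge 3$, forces $n_i \ge 4$, so the maximal chain of each $Q_{n_i}$ has length $\ge 5$ and condition $(\ast)$ is automatic in every cuboid that arises. Since each $n_i$ is even, every $C_i^j$ has odd length, so it is either of length $1$ or of length $\ge 3$; the non-maximal chains are very proper in their respective hypercubes, while each maximal chain is proper but not very proper.

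We decompose the cuboid $C^j = \prod_i C_i^j$ according to its factor types. If no factor is maximal, every factor is very proper and iterated Decomposition Lemma 6.2 produces a very proper decomposition. If some factor is maximal but some non-maximal factor has length $\ge 3$, we pair such a factor with a maximal one and apply Decomposition Lemma 6.2(3), then fold in the remaining factors iteratively via further applications of Decomposition Lemma 6.2 to obtain a very proper decomposition. If every non-maximal factor has length $1$ and at least two factors are maximal, iterating Decomposition Lemma 6.2(1) on the product of the maximal factors yields a proper decomposition, and taking the product with the length-$1$ factors preserves properness. The only remaining case is a \emph{spike}: exactly one maximal factor with the rest of length $1$; the cuboid is then already a single chain, which is neither proper nor very proper, but requires no further decomposition.

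The pairwise intersection analysis is mostly a direct application of Lemma 4.2. Whenever at least one of two cuboids has a proper decomposition, chain-chain intersections have size at most $1$, with the sole exception that if every factor of both cuboids is maximal, the two maximal chains meet in exactly the min and max of the cuboids. The only cuboids with all factors maximal are the ``all-max'' cuboids $\prod_i M_i^j$, one per $j$; their min and max are $(\varnothing, \dots, \varnothing)$ and $([n_1], \dots, [n_r])$, the global min and max of the product hypercube, and this is precisely the intersection pattern required of the maximal chains of an almost orthogonal family. For a spike versus a non-spike, Lemma 4.2 still applies with the non-spike as the ``first cuboid'', and its exception never triggers because the spike has length-$1$ factors, so all chain-chain intersections have size at most $1$.

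The main obstacle, and the only genuinely subtle case, is spike versus spike. This is where goodness is essential: by hypothesis, any two length-$1$ chains drawn from different $\mathcal{F}_i^j, \mathcal{F}_i^{j'}$ with $j \ne j'$ are distinct, hence disjoint subsets of $Q_{n_i}$. Thus if two spikes $C^j, C^{j'}$ share any coordinate at which both carry a length-$1$ factor, the cuboids are already disjoint. The only configuration without such a shared coordinate is $r = 2$ with the two spikes having distinct maximal coordinates, in which case a direct coordinate-wise computation gives $|C^j \cap C^{j'}| \le 1$. The same goodness observation immediately handles cuboids all of whose factors have length $1$, completing the construction of $k$ almost orthogonal symmetric chain decompositions of $Q_{n_1 + \ldots + n_r}$.
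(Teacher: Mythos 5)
Your decomposition plan runs into a genuine problem at the step ``taking the product with the length-$1$ factors preserves properness.'' This is false. By Definition 4.1, a proper chain is allowed to fail the very-proper condition \emph{only} at the pair consisting of the global minimum $(\varnothing,\ldots,\varnothing)$ and global maximum $([n_1],\ldots,[n_r])$ of the ambient product $Q_{n_1}\times\cdots\times Q_{n_r}$. When you properly decompose the product of the maximal factors and then cross with a fixed nonempty, non-full subset in each length-$1$ factor, the maximal chain of the resulting decomposition has a minimum and maximum that are \emph{not} the global minimum and maximum, yet fails the very-proper condition precisely at that pair. So that chain is not proper, and consequently Lemma 4.2 does not apply to such cuboids. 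This matters: if two cuboids $C^j$ and $C^{j'}$ (with $j\neq j'$) each consist of $\ge 2$ maximal factors and $\ge 1$ length-$1$ factors, have the same set of maximal coordinates, and (hypothetically) the same length-$1$ chains, their maximal chains would share both top and bottom, giving a $2$-element intersection. So the conclusion you attribute to Lemma 4.2 actually requires goodness for \emph{all} such pairs, not only spike-versus-spike as your write-up states. (As a smaller point, ``iterating Decomposition Lemma 6.2(1)'' on a product of $\ge 3$ maximal factors is not quite justified, since zigzag decompositions produce chains of length as small as $1$; the paper's Lemma 8.2 fixes this by a recursive partial-zigzag argument.)

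The paper's proof of this theorem avoids your misstep by not claiming properness for the decompositions of cuboids with length-$1$ factors. It reduces, via Lemmas 8.2 and 8.3, to cuboids that are products of maximal and $1$-element chains with at least one $1$-element factor, properly decomposes the maximal-factor part, and then argues directly: the only non-very-proper chain is the one maximal chain, whose only ``bad'' pair is its own top and bottom; two such chains meeting in two elements forces identical sets of maximal coordinates, hence a shared length-$1$ coordinate, hence disjointness by goodness. Your spike-versus-spike analysis is essentially a special case of this direct argument, but you need to run it for \emph{all} pairs of cuboids that mix maximal and length-$1$ factors, not just the spikes.
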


Note that as before, $k \ge 3$ implies $n_i \ge 4$, so $(\ast)$ holds for all cuboids we consider. We start by generalizing Lemmas 6.4 and 7.6.

\begin{lem}
A cuboid of size $r_1 \times r_2 \times \ldots \times r_t$ satisfying $(\ast)$, $r_i \ge 5$ has a proper decomposition.
\end{lem}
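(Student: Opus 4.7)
The plan is to induct on $t$, with base case $t=3$ given by Lemma 6.4. For the inductive step with $t \ge 4$, I mimic the partial zigzag strategies of Lemmas 6.4 and 7.6. Apply a partial zigzag decomposition to $r_1 \times r_2$, splitting it into a proper chain $P$ of length $r_1+r_2-1 \ge 9$, a very proper chain $V$ of length $r_1+r_2-3 \ge 7$, and a residual rectangle $R$ of dimensions $(r_1-2)\times(r_2-2)$ (a product of two very proper chains of length $\ge 3$). The cuboid is then partitioned as
\[
(P \sqcup V \sqcup R) \times r_3 \times \cdots \times r_t,
\]
and it suffices to produce a (very) proper decomposition of each of these three pieces; their union then forms a proper decomposition of the original cuboid.

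For the piece $P \times r_3 \times \cdots \times r_t$, I apply the inductive hypothesis: this is a $(t-1)$-fold cuboid with all sides of length $\ge 5$ (since $r_1+r_2-1 \ge 9$) and satisfying $(\ast)$ trivially, hence admits a proper decomposition. For the piece $V \times r_3 \times \cdots \times r_t$, I first produce a proper decomposition of $r_3 \times \cdots \times r_t$ itself, which is the inductive hypothesis for $t \ge 5$ and is Decomposition Lemma 6.2(1) applied to $r_3 \times r_4$ for $t=4$. Since $V$ is a very proper chain of length $\ge 3$, the first bullet following Decomposition Lemma 6.2 then converts this into a very proper decomposition of the whole piece.

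For the residual piece $R \times r_3 \times \cdots \times r_t$, I start from the same proper decomposition of $r_3 \times \cdots \times r_t$, take the product with the very proper chain of length $r_1 - 2 \ge 3$ to obtain (via the first bullet of Decomposition Lemma 6.2) a very proper decomposition of $(r_1-2) \times r_3 \times \cdots \times r_t$, and then take the product with the very proper chain of length $r_2 - 2$ to obtain (via the second bullet) a very proper decomposition of the whole piece. Assembling the three yields the desired proper decomposition. I do not expect a serious obstacle: the argument is essentially a bookkeeping induction, and the only thing to monitor is that $(\ast)$ continues to hold in each inductively-invoked sub-cuboid, which is immediate because all sides introduced by the partial zigzag have length $\ge 3$, all inductively-used sub-cuboids have every side of length $\ge 5$, and the factors $r_3, \ldots, r_t$ are unchanged.
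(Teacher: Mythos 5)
Your proof is correct and follows essentially the same route as the paper: partial zigzag on two of the factors, induction applied to the resulting proper and very proper chains times the remaining factors, and repeated use of Decomposition Lemma 6.2 on the residual rectangle. The only cosmetic difference is that the paper handles the very proper chain $V$ by invoking the inductive hypothesis directly (since $V$ has length $\ge 7 \ge 5$), whereas you route it through the bullets of Lemma 6.2 — both are valid.
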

\begin{proof}
We've already seen this for $t=2,3,4$ in Decomposition Lemma 6.2, Lemma 6.4, and Lemma 7.6 respectively. We proceed by induction --- assume we always have a proper decomposition for such cuboids of dimension $<t$. Take the last two factors $r_{t-1} \times r_t$, and do a partial zigzag decomposition to get a proper chain of length $r_{t-1}+r_t-1$, a very proper chain of length $r_{t-1}+r_t-3$, and a rectangle of dimensions $(r_{t-1}-2)\times (r_t-2)$ (the product of two very proper chains of length $\ge 3$). For the chains, use the inductive hypothesis for $k-1$. For the rectangle, take a proper decomposition of the remaining $k-2$ factors (which exists by induction), then since $(r_{t-1}-2)$ and $(r_t-2)$ are very proper of length $\ge 3$, we're done by applying Decomposition Lemma 6.2 two more times.
\end{proof}

\begin{lem}
A cuboid of size $r_1 \times r_2 \times \ldots \times r_t$ satisfying $(\ast)$, $r_1 \ge 3$ very proper has a very proper decomposition.
\end{lem}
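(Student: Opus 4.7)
The plan is to sort the factors into two groups according to the threshold imposed by $(\ast)$: call $r_i$ \emph{large} if $r_i \ge 5$ (in which case $(\ast)$ only guarantees it is proper) and \emph{small} if $r_i < 5$ (in which case $(\ast)$ forces it to be very proper). Together with the hypothesis, this means that $r_1$ is very proper of length $\ge 3$, every small factor is very proper, and every large factor is at least proper. Since cartesian products are symmetric in their factors, I may permute the indices $i \ge 2$ freely.

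First I would handle the large factors collectively: the product of just the large factors among $r_2,\ldots,r_t$ admits a proper decomposition. If there are at least two large factors this is Lemma 8.2; if there is exactly one, that factor is already a proper chain and so constitutes its own proper decomposition; if there are none, this step is vacuous. Next I would take the product of that proper decomposition with the very proper chain $r_1$ of length $\ge 3$: the first ``in particular'' bullet of Decomposition Lemma 6.2 promotes the resulting decomposition to a very proper one. (In the exactly-one-large-factor case one could alternatively invoke Decomposition Lemma 6.2(3) directly on $r_1 \times r_{i_1}$, and in the no-large-factors case I simply start with $r_1$ as a very proper chain.) Finally, I would multiply successively by the remaining small very proper factors, using the second ``in particular'' bullet of Decomposition Lemma 6.2 to preserve very-properness at each step; crucially this bullet imposes no length requirement on the chain being appended, which I need because a small factor can have length $2$.

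The only genuinely nontrivial ingredient is Lemma 8.2 (itself already established), invoked in the case of two or more large factors; every other step is routine bookkeeping with Decomposition Lemma 6.2. The role of the hypothesis $r_1 \ge 3$ is precisely to enable the promotion from a proper decomposition of the large-factor sub-cuboid to a very proper decomposition once $r_1$ is multiplied in, via the first ``in particular'' bullet. The main thing to monitor in writing up the argument is that the large/small split is performed correctly and that one tracks which version (proper vs.\ very proper) of the Decomposition Lemma is being invoked at each stage, since the two cases have subtly different length requirements on the multiplicand.
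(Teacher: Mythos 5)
Your proposal is correct and follows essentially the same route as the paper: properly decompose the product of the factors of length $\ge 5$ (among $r_2,\ldots,r_t$) via Lemma 8.2, multiply in $r_1$ to promote to a very proper decomposition via the first ``in particular'' bullet of Decomposition Lemma 6.2, then absorb the remaining very proper factors via the second bullet. Your explicit handling of the zero- and one-large-factor edge cases is a minor elaboration that the paper leaves implicit, but the argument is the same.
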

\begin{proof}
Do a proper decomposition of all factors of length $\ge 5$ by Lemma 8.2, then apply Decomposition Lemma 6.2 with $r_1$ to get a very proper decomposition. The remaining factors are all very proper, so we're done by repeated applications of Decomposition Lemma 6.2.
\end{proof}

The only cuboids we have not dealt with are the ones which are the products of chains of maximal size and minimal size (size 1), and contain at least one chain of minimal size.

For each of these, do a proper decomposition on the factors containing the chains of maximal size. Every chain in such a decomposition is very proper except the maximal chain in the cuboid, and the only two elements the maximal chain could have in common with any other chain are its maximal and minimal elements as this is the only pair of elements that properness fails at. Hence, if two chains intersected in two elements, it would be two maximal chains intersecting in their common top and bottom elements, forcing the cuboids to have the same dimensions. But then they share a factor with a $1$-element chain, which by the goodness of the factor containing the $1$-element chain forces the cuboids to be disjoint.

\section{4 hypercubes, one odd and three good even}
We now consider the last case needed to prove Theorems 3.3 and 3.4.
\begin{thm}
Take $m \ge 3$ odd, and $n,p,q \ge 2$ even. Suppose $Q_m$, $Q_n$, $Q_p$, and $Q_q$ have families of $k \ge 3$ almost orthogonal symmetric chain decompositions $\mathcal{F}_i, \mathcal{G}_i$, $\mathcal{H}_i$, and $\mathcal{K}_i$, with the decompositions of $Q_n$, $Q_p$, and $Q_q$ good. Then we can construct $k$ almost orthogonal symmetric chain decompositions of $Q_{m+n+p+q}$ by decomposing the cuboids in $\mathcal{F}_i\times \mathcal{G}_i \times \mathcal{H}_i \times \mathcal{K}_i$ into symmetric chains.
\end{thm}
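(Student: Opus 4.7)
The plan is to mirror the strategy of Sections 6--8: decompose as many cuboids in $\mathcal{F}_i\times\mathcal{G}_i\times\mathcal{H}_i\times\mathcal{K}_i$ as possible into (very) proper symmetric chains, so that Lemma 4.2 controls their pairwise interactions automatically, and then mop up the few remaining cuboids using the goodness of the three even hypercubes $Q_n$, $Q_p$, $Q_q$.

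Since $k\ge 3$ forces $m\ge 5$ and $n,p,q\ge 4$, condition $(\ast)$ holds on every cuboid under consideration; under $(\ast)$, a side is very proper precisely when it is not the maximal chain of its factor. Consequently Lemma 8.3 disposes of every cuboid admitting a non-maximal side of length $\ge 3$, and Lemma 8.2 disposes of the unique all-maximal cuboid $(m+1)\times(n+1)\times(p+1)\times(q+1)$, whose sides are all $\ge 5$.

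What remains is the inventory of cuboids whose $Q_m$-side lies in $\{2,m+1\}$ and each of whose even sides lies in $\{1,\text{maximal}\}$. For any such cuboid with at least one $1$-element even side, I would delete the $1$-element factors and decompose the resulting lower-dimensional cuboid---a constant coordinate can never witness a difference between two elements of a chain, so (very) properness lifts directly from the reduction back to the full product. The reductions that arise are $(m+1)\times(n+1)\times(p+1)$ and $2\times(n+1)\times(p+1)$, handled by Lemmas 8.2 and 6.5; the rectangle $(m+1)\times(n+1)$, handled by the zigzag of Lemma 5.2; and the one-factor reductions $(m+1)$ (proper, since the only non-triviality lies at its min--max pair) and $2$ (very proper). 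The only four-dimensional cuboid with no $1$-element even side, $2\times(n+1)\times(p+1)\times(q+1)$, is handled directly by Lemma 7.3.

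The only cuboids still lacking a proper decomposition are the three permutations of $2\times(n+1)\times 1\times 1$, each of which reduces to a $2\times(n+1)$ rectangle; these admit no proper decomposition (Section 5), and I would simply decompose each of them via a top or bottom decomposition.

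To verify almost orthogonality, fix families $i\ne j$ and cuboids $A$ from family $i$ and $B$ from family $j$. If at least one of $A$, $B$ is properly decomposed, Lemma 4.2 gives chain intersections of size at most $1$, with the sole exception being when both are the all-maximal cuboid, in which case their maximal chains meet in exactly $\{\varnothing,[m+n+p+q]\}$---precisely the allowed two-element intersection of almost orthogonality. Otherwise both $A$ and $B$ lie among the three problematic types, so each has $1$-element chains in two of the three even factors; by pigeonhole on a three-element set they share some common even factor in which both have $1$-element sides, and goodness of that hypercube forces those two $1$-element chains (coming from distinct families $i$ and $j$) to be disjoint, yielding $A\cap B=\varnothing$.

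The main obstacle is ensuring the pigeonhole step closes: it is available precisely because we have exactly three good even hypercubes and each problematic type uses $1$-element sides in two of them, so any two problematic cuboids must share a $1$-element factor. With fewer good even factors this argument would collapse, which is why the hypothesis demands three good even hypercubes to accompany the odd $Q_m$ whose goodness cannot be assumed.
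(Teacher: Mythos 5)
Your overall strategy is the paper's: use Lemma 8.3 and Lemma 8.2 to clear all cuboids admitting a non-maximal side of length $\ge 3$ and the all-maximal cuboid, enumerate the leftover cuboids whose $Q_m$-side is $2$ or $m+1$ and whose even sides are $1$ or maximal, dispose of most via Lemmas 6.5, 7.3, and an appeal to goodness plus pigeonhole for the three cuboids of type $2\times(n+1)\times 1\times 1$ and its permutations. However, there is a genuine gap in the step where you assert that ``(very) properness lifts directly from the reduction back to the full product.''

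The lift claim is correct for \emph{very} proper but \emph{false} for merely proper. The ``proper'' definition grants an exemption only for the pair consisting of the smallest and largest element of the \emph{ambient} product $Q_m\times Q_n\times Q_p\times Q_q$. When you delete a $1$-element factor with value $c$ (necessarily $c\notin\{\varnothing,[n_i]\}$ since it is a middle-rank set) and decompose the reduced cuboid, the reduced maximal chain runs from, say, $(\varnothing,\varnothing,\varnothing)$ to $([m],[n],[p])$; after reinserting the constant coordinate, its endpoints $(\varnothing,\varnothing,\varnothing,c)$ and $([m],[n],[p],c)$ are \emph{not} the ambient min and max, so the exemption no longer applies, and no coordinate witnesses this pair. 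Thus the decompositions you obtain of $(m+1)\times(n+1)\times(p+1)\times 1$, $(m+1)\times(n+1)\times 1\times 1$, $(m+1)\times 1\times 1\times 1$ (and their permutations in the even slots) are \emph{not} proper in the full product --- each contains one bad chain (the maximal one) failing exactly at its own min--max pair --- so Lemma 4.2 cannot be invoked for them. Your case analysis then leaves unaddressed the pairwise interactions where one or both of $A,B$ is such a cuboid.

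To close the gap one must argue, as the paper does (see the end of the proof of Theorem 8.1, invoked again in Section 9), that the unique chain failing properness can only cause a two-element intersection by matching its own min and max with those of another cuboid's bad chain; that in turn forces the two cuboids to have the same side lengths, hence to share a $1$-element factor in some even hypercube, whereupon goodness of that factor makes the cuboids disjoint. Similarly, for the interaction between a $2\times(n+1)\times 1\times 1$-type cuboid and a $(m+1)\times\cdots$-type cuboid with a $1$-element side, one observes that an intersection of size $\ge 2$ would require maximal chains in some common factor, which again forces a shared $1$-element even factor and hence disjointness by goodness. Your pigeonhole argument covers the three-problematic-type pairs correctly, but the remaining interactions require this extra analysis rather than an appeal to Lemma 4.2.
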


Note that as before, $k \ge 3$ implies $m,n,p,q \ge 4$, so $(\ast)$ holds for all cuboids we consider.

\begin{proof}
First note that Lemma 8.3 reduces us to considering cuboids formed by products of maximal and minimal length chains.

Consider first when the factor in $Q_m$ is a $2$-element chain. If none of the factors is maximal, then we are done by Decomposition Lemma 6.2. Lemma 7.3 shows we have a very proper decomposition if the remaining factors are all maximal, and Lemma 6.5 shows we have a very proper decomposition when two of the remaining factors are maximal and one is minimal (of length $1$) by Decomposition Lemma 6.2. Hence the remaining cuboids we have to deal with have sizes $2 \times (n+1) \times 1 \times 1$, $2 \times 1 \times (p+1) \times 1$, or $2 \times 1 \times 1 \times (q+1)$. Any two such cuboids have a $1$-element chain in the same factor, which by goodness of the corresponding hypercube forces them to be disjoint. We will shortly return to consider their intersections with other cuboids.

Assume now that the factor in $Q_m$ is a maximal chain. Lemma 8.2 gives us a proper decomposition when all of the factors are maximal. When there is at least 1 minimal chain in this case, then just as in the end of the proof of Theorem 8.1, any two such cuboids will have no problems with their pairwise intersections when we properly decompose the non-$1$ factors.

Thus we only have to consider the intersections of the remaining cuboids in the first case with cuboids in the second case with at least one factor of length $1$. By almost orthogonality, the maximal length chain in the cuboid from the first case must be in the same factor as a maximal length chain in the cuboid from the second case for the cuboids to have intersection of size at least 2. But then the cuboids must share a factor with a $1$-element chain, forcing disjointness by the goodness of that factor.
\end{proof}

\section{Proof of Theorems 3.1 and 3.3}
We will prove Theorem 3.3 through a series of lemmas, and then prove Theorem 3.1 as a consequence. Note that as before, $k \ge 3$ implies $n_i \ge 4$, so $(\ast)$ holds for all cuboids we consider. First note that Theorem 8.1 handles the case when all hypercubes are even dimensional, so we may assume at least one hypercube is odd dimensional.

\begin{lem}
Theorem 3.3 is true when exactly one of the $r$ hypercubes is odd dimensional.
\end{lem}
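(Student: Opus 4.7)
The plan is to decompose each cuboid $C = \prod_i A_i$ with $A_i \in \mathcal{F}^j_i$ into symmetric chains, case by case, and verify that the resulting symmetric chain decompositions are pairwise almost orthogonal. Since Theorems 5.6, 6.1, and 9.1 settle the cases $r = 2, 3, 4$, I assume $r \ge 5$ and without loss of generality let $Q_{n_1}$ be the unique odd factor, which is necessarily good. By Lemma 8.3 any cuboid with a factor that is a non-maximal chain of length $\ge 3$ (hence a very proper chain of length $\ge 3$) has a very proper decomposition, which by Lemma 4.2 introduces no problematic intersections. It remains to handle the \emph{problematic} cuboids, in which every factor is either maximal or of minimum length: $A_1$ of length $2$ or $n_1+1$, and each $A_i$ with $i \ge 2$ of length $1$ or $n_i+1$.

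When $A_1$ is maximal the argument mirrors Theorem 8.1: if every $A_i$ is maximal, apply Lemma 8.2 to get a proper decomposition, whose only non-very-proper chain is the unique full maximal chain of the product; if some $A_i$ ($i \ge 2$) has length $1$, apply Lemma 8.2 to the product of the non-singleton factors and observe that the resulting chains are automatically very proper in $Q_{n_1+\ldots+n_r}$, since the singleton coordinates force the cuboid to avoid the full-product extremes. Two such cuboids of the same shape coming from different $j$ are then disjoint by goodness of any even factor that carries a length-$1$ chain in both.

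When $A_1$ has length $2$, write $A_1 = \{\alpha, \beta\}$ and partition $\{2,\ldots,r\} = T \sqcup S'$ according to whether $A_i$ has length $1$ or is maximal. If $|S'| \ge 2$, the cuboid is, up to singleton factors, of the form $2 \times r_2 \times \ldots \times r_t$ with $t \ge 3$ and each $r_i \ge 5$; I would prove by induction on $t$ that any such cuboid admits a very proper decomposition. The base case $t = 3$ is Lemma 6.5, and for the inductive step I partial zigzag $r_{t-1} \times r_t$ into a proper chain of length $r_{t-1}+r_t-1$, a very proper chain of length $r_{t-1}+r_t-3$, and a rectangle $(r_{t-1}-2) \times (r_t-2)$ of two very proper chains of length $\ge 3$. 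The proper chain is handled by the inductive hypothesis on $t-1$ factors; the other two pieces each supply a very proper chain of length $\ge 3$, so Lemma 8.3 applies directly. The case $|S'| = 0$ is a single $2$-element chain, trivially very proper.

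The main obstacle is the case $|S'| = 1$, $|T| \ge 1$, where the cuboid is effectively a $2 \times (n_{i_0}+1)$ rectangle sitting above singletons and admits no very proper decomposition. I would use the bottom (or top) decomposition: the shorter chain is very proper, while the longer chain $L$ has a single bad pair $\{(\alpha, e_T, \varnothing_{i_0}), (\alpha, e_T, [n_{i_0}])\}$ violating very properness. The crucial observation is that any other cuboid $C_2$ whose decomposition could place both elements of this pair in a single chain must have $\alpha \in A_1^{C_2}$, $A_{i_0}^{C_2}$ maximal (to contain both $\varnothing$ and $[n_{i_0}]$), and $e_l \in A_l^{C_2}$ for every $l \in T$; goodness of each $Q_{n_l}$ ($l \in T$) forbids a length-$1$ chain in $\mathcal{F}^{j'}_l$ from equaling $\{e_l\}$, forcing each $A_l^{C_2}$ to be maximal, and thus $C_2$ to have every $A_i^{C_2}$ ($i \ge 2$) maximal and $A_1^{C_2}$ either maximal or of length $2$. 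For such $C_2$ we used either Lemma 8.2 or the induction above, and neither decomposition can contain this bad pair in a single chain: the pair differs only in coordinate $i_0$ via $\{\varnothing, [n_{i_0}]\}$ and is not the pair of full-product extremes, so it violates properness in $Q_{n_1+\ldots+n_r}$. Combined with Lemma 4.2 for the remaining very proper chains, this yields the required pairwise almost orthogonality.
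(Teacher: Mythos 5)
The paper's own proof is a short reduction: apply Theorem 5.6 once to the odd hypercube and one even hypercube if the number of even hypercubes is odd, and then repeatedly apply Theorem 6.1 (odd + two good evens $\to$ odd) until one hypercube remains. No cuboid decomposition is done by hand; everything is absorbed into previously established results.

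Your proof takes a genuinely different route: after dispatching $r \le 4$ via Theorems 5.6, 6.1, and 9.1, you run a direct cuboid-by-cuboid analysis for $r \ge 5$, in the style of the proofs of Lemma 10.4, Lemma 10.5, and Theorem 8.1. This is considerably longer, and essentially rederives a generalization of Lemma 7.3 (your induction on $t$) that the paper avoids by factoring through Theorem 6.1. If your direct argument were airtight this would be a legitimate alternative, but there are two soft spots. First, your claim that applying Lemma 8.2 to the non-singleton factors yields chains that are ``automatically very proper in $Q_{n_1+\ldots+n_r}$'' is not accurate: the maximal chain of such a cuboid still contains the bad pair $\bigl((\varnothing)_S,e_T\bigr)$, $\bigl(([n_i])_S,e_T\bigr)$ --- the extremes of the cuboid --- and these are precisely ``the only pair of elements that properness fails at,'' as Section~8 takes care to state. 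One must argue, as the paper does there, that any chain from another cuboid containing both of these forces that cuboid's length-$1$ factors to coincide with yours, contradicting goodness. Second, your disjointness sentence only treats two cuboids ``of the same shape''; it does not address the case where your cuboid's bad pair lies in an all-maximal cuboid of the other family, which in fact is the only other possibility and is handled differently (via properness of the all-maximal cuboid's decomposition). Both gaps are fillable --- your own $|S'|=1$ analysis uses essentially the right reasoning --- but as written the $A_1$-maximal case is not complete. It is worth knowing that the paper's reduction sidesteps all of this in a single paragraph.
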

\begin{proof}
Applying Theorem 5.6 once if necessary, we reduce to the product of an even number of good even dimensional hypercubes, and one (possibly not good) odd dimensional hypercube. We can now repeatedly apply Theorem 6.1 to the odd hypercube and two even ones until there is only one hypercube left.
\end{proof}
\begin{lem}
Theorem 3.3 is true when exactly two of the $r$ hypercubes are odd dimensional.
\end{lem}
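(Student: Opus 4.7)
The plan is to iteratively reduce the number of hypercube factors in the product down to one, using Theorems 5.6 and 6.1 from the previous sections. The base case $r=2$ is handled directly by Theorem 5.6, which applies because both odd hypercubes are good, and $r=3$ is handled by Theorem 6.1 applied to the two good odd and one good even hypercube (all three families are good, so the hypothesis is automatic).

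For $r \ge 4$, I would maintain an ``active'' odd hypercube, starting as one of the two original good odd hypercubes, and repeatedly combine it with two of the good even hypercubes using Theorem 6.1. Each such application consumes two even factors and replaces the active odd hypercube with a new (possibly non-good) odd hypercube of summed dimension; Theorem 6.1 applies because the two good evens supply the two required good families while the active odd slot does not need to be good (it is odd-dimensional). Throughout this process I leave the second original good odd hypercube untouched, so that it survives to supply a second good family at the end. When exactly three hypercubes remain --- namely the (possibly non-good) active odd, the untouched good odd, and one good even --- a final invocation of Theorem 6.1 combines them into a single hypercube, producing the desired $k$ almost orthogonal symmetric chain decompositions of $Q_{n_1+\cdots+n_r}$.

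The main obstacle is parity: each Theorem 6.1 step shrinks the number of remaining even hypercubes by exactly two, so the iterative scheme terminates cleanly at three factors only when $r-2$ is odd. When $r-2$ is even, I would fix the parity at the outset by a single application of Theorem 5.6, combining the active odd hypercube (still good at this stage) with one good even hypercube into a (possibly non-good) active odd hypercube. This preliminary step uses up one good even without touching the reserved good odd, and it shifts the even count by one to an odd number; since both inputs are good, Theorem 5.6 is available. From there the iterated Theorem 6.1 reduction proceeds as above and terminates at the three-factor configuration with two good families, completing the proof.
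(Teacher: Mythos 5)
Your proposal is correct and follows essentially the same strategy as the paper: fix the parity of the even count with a single preliminary Theorem 5.6 application when needed (the paper always performs one initial Theorem 5.6 or 6.1 step, which amounts to the same thing), then iterate Theorem 6.1 pairing the active odd hypercube with two good evens while reserving the second good odd, and finish with one last Theorem 6.1 on the remaining three factors.
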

\begin{proof}
If there is at most one even dimensional hypercube, we're done by Theorem 5.6 or Theorem 6.1. Otherwise, apply either Theorem 5.6 or Theorem 6.1 with exactly one odd dimensional hypercube so that we're now left with an odd number of even dimensional hypercubes, and two odd dimensional hypercubes, one of which is possibly not good. Take the possibly not good odd dimensional hypercube and repeatedly apply Theorem 6.1 with two even dimensional hypercubes to reduce down to one even dimensional hypercube, and two odd dimensional hypercubes, one of which is possibly not good. Now apply Theorem 6.1 to the remaining 3 hypercubes.
\end{proof}
\begin{lem}
Assume Theorem 3.3 is true when there are at least three odd dimensional hypercubes and at most 1 even dimensional hypercube. Then Theorem 3.3 is true when there are at least 3 odd dimensional hypercubes.
\end{lem}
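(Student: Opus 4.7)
The plan is to reduce to the assumed case by iteratively absorbing pairs of even dimensional hypercubes into an odd dimensional one via Theorem 6.1. Write $r_o$ and $r_e$ for the number of odd and even dimensional factors in the product. By hypothesis $r_o \ge 3$, every even factor carries a good family of decompositions, and at least two of the odd factors do as well. The target configuration has $r_e \le 1$ with these invariants still intact, so that the assumed hypothesis becomes applicable.

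While $r_e \ge 2$, I select two even factors (which are automatically good) together with one odd factor, choosing an odd factor not known to be good whenever one is available. The hypothesis of Theorem 6.1 --- that two of three decompositions are good and the third is either good or associated to an odd dimensional hypercube --- is met by the two good even factors, so Theorem 6.1 produces $k$ almost orthogonal symmetric chain decompositions of the product of the chosen triple. The merged hypercube has odd dimension (odd $+$ even $+$ even is odd), so I keep it in the collection as a (not necessarily good) odd factor. This replaces three factors by one, leaving $r_o$ odd factors and $r_e - 2$ even ones, with every remaining even factor still carrying its original good decompositions.

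Iterating $\lfloor r_e/2 \rfloor$ times, I reach a configuration with $r_e \in \{0,1\}$, $r_o \ge 3$, all even factors good, and at least two odd factors good; at that point the assumed case of the lemma produces $k$ almost orthogonal symmetric chain decompositions of the full product $Q_{n_1 + \cdots + n_r}$.

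The only delicate piece of bookkeeping is verifying that at least two good odd factors remain throughout the iteration, which I handle by a simple case split: either some odd factor is not known to be good, in which case I consume it in the next merging and every good odd factor survives; or every odd factor is good, in which case consuming one still leaves $r_o - 1 \ge 2$ good odd factors, since $r_o$ is unchanged by each merging and thus never drops below $3$. The apparent pitfall that the new merged hypercube might itself need to be good does not arise, because Theorem 6.1 places no goodness requirement on its odd factor, and the merged hypercube is used subsequently only in the role of a (possibly non-good) odd factor until the assumed case takes over.
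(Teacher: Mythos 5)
Your proposal is correct and follows the same route as the paper: repeatedly merge a (not-necessarily-good) odd factor with two good even factors via Theorem 6.1, preserving the two good odd factors and eventually reaching $r_e \le 1$. The one cosmetic difference is that the paper also cites Theorem 9.1 (one odd plus three good even), giving the freedom to absorb three evens per step; your version using only Theorem 6.1 is sufficient, since ending with exactly one even factor is permitted by the assumed case. Your careful accounting of which odd factor to consume, and the observation that the merged hypercube only ever needs to play the non-good odd role in Theorem 6.1, fills in details the paper leaves implicit.
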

\begin{proof}
Suppose we have at least $2$ even dimensional hypercubes. We have at least 2 good odd dimensional hypercubes, take a third one. Repeatedly applying Theorem 6.1 and 9.1 with this third hypercube and some of the even dimensional hypercubes, we reduce to the case of the same number of odd dimensional hypercubes, but with at most 1 even dimensional hypercube.
\end{proof}
\begin{lem}
Theorem 3.3 is true when there are no even dimensional hypercubes, and $r \ge 3$.
\end{lem}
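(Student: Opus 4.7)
The plan is to mimic the structure of the proof of Theorem 8.1 (the analogous result for arbitrarily many good even hypercubes), with the twist that now every factor's minimum chain length is $2$ rather than $1$, so that the relevant ``extreme'' cuboids contain $2$-element chain factors instead of $1$-element chain ones. As before, $k \ge 3$ forces all $n_i \ge 4$, so $(\ast)$ holds throughout, and Lemmas 8.2 and 8.3 generalize immediately to products of arbitrarily many odd hypercubes: cuboids with every side of length $\ge 5$ admit a proper decomposition, and cuboids with a very proper side of length $\ge 3$ admit a very proper decomposition. Since every $2$-element chain in an odd hypercube is automatically very proper, this already covers every cuboid except those whose factors are all either maximal or of length exactly $2$.

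For each remaining ``extreme'' cuboid, I would perform a proper decomposition of the maximal-length factors (possible by Lemma 8.2, or simply by the Decomposition Lemma if fewer than two such factors exist), leaving the $2$-element chain factors as very proper. The only chain in the resulting decomposition that fails to be very proper is the single maximal chain through the cuboid, and properness fails for it only at its top and bottom elements. Consequently, two extreme cuboids can intersect in more than one element only when the positions of their maximal-length factors coincide, forcing the cuboids to have identical dimensions and to share at least one $2$-element chain factor in some $Q_{n_i}$.

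At this point we invoke the hypothesis that at least two of the $\{\mathcal{F}^j_i\}$ are good collections. By Lemma 5.5 and Definition 2.2, the graph of $2$-element chains across the $k$ SCDs of a good odd hypercube admits an orientation with every vertex of out-degree $\le 1$. Exactly as in the final portions of the proofs of Theorems 5.6, 6.1, and 7.1, this orientation tells us whether to use a ``top'' or ``bottom'' decomposition on each $2 \times C$ or $2 \times 2 \times C$ sub-block (where $C$ runs over the various maximal chains), and these choices can be coordinated across the $k$ SCDs so that any two resulting chains in different extreme cuboids (with matching maximal-factor positions) intersect in at most one element. For cuboids with at least two $2$-element factors, at least one such factor lies in a good $Q_{n_i}$, so the good hypothesis can be applied to that factor; for cuboids with only a single $2$-element factor, that factor is necessarily in a good $Q_{n_i}$ (up to choosing which extreme cuboid to compare against), and the same argument applies.

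The main obstacle is the combinatorics of the extreme cuboids when many $2$-element factors are present simultaneously. Unlike Theorem 8.1, where the analogous argument reduced to the trivial statement that $1$-element chains in a good even collection are distinct, here the goodness of odd hypercubes only gives a bounded-out-degree orientation, and we must verify that the top/bottom decomposition choices dictated by the orientation remain consistent across all extreme cuboids simultaneously. I expect this to work by reducing, for each fixed choice of ``maximal-factor positions,'' to the two-factor case already handled in Theorem 5.6, so that the intricate case analysis collapses to a pairwise check governed by the goodness of a single well-chosen factor.
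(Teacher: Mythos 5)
Your proposal has a genuine gap at the key step. You claim that after properly decomposing the maximal-length factors and then forming the product with the remaining $2$-element chain factors, ``the only chain in the resulting decomposition that fails to be very proper is the single maximal chain through the cuboid, and properness fails for it only at its top and bottom elements.'' This is false. Let $C = a_1 < \cdots < a_\ell$ be the merely proper chain coming from the proper decomposition of the maximal factors (so very properness fails exactly for the pair $\{a_1, a_\ell\}$), and let $b_1 < b_2$ be one of the $2$-element chains. No matter how you decompose the rectangle $C \times \{b_1, b_2\}$ into symmetric chains, the longer chain must pass through either $(a_1,b_2)$ or $(a_\ell, b_1)$, and hence contains one of the pairs $\{(a_1,b_2),(a_\ell,b_2)\}$ or $\{(a_1,b_1),(a_\ell,b_1)\}$. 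In either case the differing coordinates are all of the form $\{\varnothing,[m_i]\}$, the pair is \emph{not} the global min and max of the total cuboid (since the $Q_{n_j}$-coordinate is an interior element), and so the resulting chain is not even proper. Moreover, you cannot invoke any part of the Decomposition Lemma to form this product: Lemma 6.2(3) requires the very proper side to have length $\ge 3$, but a $2$-element chain has length exactly $2$, and parts (1) and (2) need either both sides $\ge 3$ or both very proper. So your intermediate construction neither yields a (very) proper decomposition nor confines the failures to the global extremes, and your downstream intersection argument does not go through as written.

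The paper's proof sidesteps this by never taking a product of a merely proper chain directly with a $2$-element chain. Instead it shows that when there are at least two maximal factors, the extreme cuboid actually \emph{does} admit a genuine very proper decomposition: for exactly two maximal factors, apply Lemma 6.5 to one $2$-element chain and the two maximal chains, then tensor with the remaining $2$'s via the Decomposition Lemma; for three or more maximal factors, properly decompose all but two of them with Lemma 8.2, and then for the single resulting merely-proper long chain apply Lemma 7.3 to it, the two remaining maximal chains, and one $2$-element factor, again finishing with the Decomposition Lemma. This reduces the goodness argument to exactly the cuboids of shape $2 \times \cdots \times 2 \times (n_r+1)$ with a single maximal factor, where two such cuboids with the maximal chain in the same position are handled via top/bottom decompositions governed by the goodness of the second-to-last factor, as in Section 5. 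Your last paragraph correctly senses that ``the combinatorics of the extreme cuboids'' is the crux, but you leave it as an expectation rather than an argument; the case analysis in Lemma 10.4 is precisely what makes that expectation rigorous by eliminating every case except $s=1$ before goodness is ever invoked.
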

\begin{proof}
Using Lemma 8.3, all product cuboids have a very proper decomposition except possibly for the ones formed by products of maximal chains, and $2$-element chains. If all chains are maximal, then we have a proper decomposition by Lemma 8.2, so assume we have at least one $2$-element chain. If all chains have length $2$, then the cuboid is a product of very proper chains, so we're done by Decomposition Lemma 8.2. If there are exactly $2$ maximal chains, then using Lemma 6.5 on the $2$ maximal chains and a $2$-element chain, we are done by Decomposition Lemma 6.2. If there are at least $3$ maximal chains, do a proper decomposition of all but two of the maximal chains by Lemma 8.2, which reduces us to two possible types of cuboids. Either we get a product of a very proper chain with the two maximal chain case above, hence we are again done by Decomposition Lemma 6.2, or we get a product of 3 proper chains of length $\ge 5$ with some non-zero number of $2$-element chains. In the latter case, we apply Lemma 7.3 with one of the $2$-element chains and the $3$ proper chains, and then conclude by Decomposition Lemma 6.2 for each of the resulting cuboids, which are all products of very proper chains.

Hence all that remains is the case when all but one factor is a $2$-element chain, and the remaining factor is maximal. Two such cuboids can intersect in more than one element only if they have the maximal chain in the same factor. We can restrict ourselves to assuming without loss of generality that the last factor has the maximal chain, so the dimensions are $2 \times 2 \times \ldots \times 2 \times (n_r+1)$. Then without loss of generality, the second last hypercube is good, so ignoring all but the last two factors, we can decompose these factors using top and bottom decompositions like in Section 3 (using the goodness of $Q_{n_{r-1}}$). Then we're left with cuboids of the form $2 \times 2 \times \ldots \times C$, and two such cuboids arising from different $\prod_i \mathcal{F}^j_i$ intersect in at most one element by the almost orthogonality of the remaining factors. Thus arbitrarily decomposing the resulting cuboids gives us the desired decomposition.
\end{proof}
\begin{lem}
Theorem 3.3 is true when there is one even dimensional hypercube, and at least three odd dimensional hypercubes.
\end{lem}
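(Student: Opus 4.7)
The plan is to mirror the proof of Lemma 10.4 by performing a direct analysis of the cuboids in $\prod_i \mathcal{F}^j_i$, now also accounting for $1$-element chains contributed by the even factor. Since $k \ge 3$ forces $n_i \ge 4$, condition $(\ast)$ holds for every cuboid, so Lemma 8.3 immediately reduces us to cuboids whose factors are each a $1$-element chain (which can only occur in the even factor, since odd hypercubes have minimum chain length $2$), a $2$-element chain, or a maximal chain.

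For cuboids with no $1$-chain, every factor is a $2$-chain or maximal, which is exactly the Lemma 10.4 setup. The same case split proceeds: the all-$2$-chain case is very proper by iterated Decomposition Lemma 6.2; cuboids with at least two maximal factors are decomposed using Lemmas 6.5, 7.3, and 8.2 together with the Decomposition Lemma; and the residual case of a single maximal factor with all other factors of length $2$ is handled by the top/bottom decomposition trick, using the goodness of one of the at least two good odd hypercubes (at most one of which can coincide with the maximal factor, so at least one remains available).

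For cuboids with a $1$-chain $\{v\}$ in the even factor, if all odd factors are $2$-chains then the cuboid is a product of very proper chains (the $1$-chain being vacuously very proper) and iterated Decomposition Lemma 6.2 gives a very proper decomposition; if some odd factor is maximal we apply the decomposition techniques of Lemmas 6.4 through 8.3 and Decomposition Lemma 6.2 to the product over the odd factors (ignoring the $1$-chain), then tensor with $\{v\}$, again invoking the top/bottom trick and the goodness of a good odd hypercube in the sub-case of exactly one maximal odd factor.

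The main obstacle is the intersection analysis for cuboids of the form $\{v\} \times M_2 \times \ldots \times M_r$ (a $1$-chain in the even factor with all odd factors maximal): the longest chain of such a cuboid necessarily contains the pair $(v, \varnothing, \ldots, \varnothing)$ and $(v, [n_2], \ldots, [n_r])$, which are not the global min/max of $Q_{n_1+\cdots+n_r}$. Goodness of the even hypercube ensures that the $1$-chain $\{v\}$ appears only in $\mathcal{F}^j_1$, so two cuboids of this form in distinct $\prod_i \mathcal{F}^j_i$ and $\prod_i \mathcal{F}^{j'}_i$ are automatically disjoint. The delicate case is when the second cuboid is instead of the form $A_1 \times M_2 \times \ldots \times M_r$ with $v \in A_1$ and $A_1$ of length $\ge 2$; here, decomposing $A_1 \times M_2 \times \ldots$ via the zigzag-type proper decomposition of Lemma 8.2 arranges the first-coordinate plateaus to occur only at $\min A_1$ or $\max A_1$ and not at the interior element $v$, ensuring no chain stays at first coordinate $v$ long enough to contain both bad points, and completing the verification of almost orthogonality.
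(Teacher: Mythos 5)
Your overall plan --- reduce via Lemma 8.3 to cuboids with maximal/minimal-length factors and split on whether the even factor contributes a $1$-element chain --- matches the paper's strategy, and your treatment of the ``no $1$-chain'' case reproduces the Lemma 10.4 argument correctly. The divergence is in how you handle cuboids with a $1$-chain in the even factor, and there the proposal is both over-engineered and has a gap.

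The paper's key observation, which you do not use, is that such cuboids may be given \emph{arbitrary} symmetric chain decompositions: goodness of the even hypercube makes any two of them from different families disjoint; Lemma 4.2 controls their intersections against every properly decomposed cuboid; and a direct factor-by-factor count shows they meet a $2 \times \cdots \times 2 \times (n_r+1)$ cuboid (the one genuinely problematic shape, with maximal even factor) in at most one element. Hence these cuboids need no special construction. You instead try to equip them with proper decompositions and top/bottom decompositions. This is both unnecessary and does not go through cleanly: tensoring a proper decomposition of $M_2 \times \cdots \times M_r$ with $\{v\}$ is \emph{not} a proper decomposition in the ambient product, because the longest resulting chain's endpoints $(v,\varnothing,\ldots,\varnothing)$ and $(v,[n_2],\ldots,[n_r])$ form a forbidden pair without being the global extrema. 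Your ``main obstacle'' paragraph then re-derives a special case of Lemma 4.2: the claim that no chain of a proper decomposition of the all-maximal cuboid contains both of those two points is exactly what properness says, not a ``plateau'' property peculiar to the zigzag of Lemma 8.2. Meanwhile, the one pairwise intersection the paper verifies explicitly --- that $1$-chain cuboids meet the $2 \times \cdots \times 2 \times (n_r+1)$ cuboids in at most one element --- is never checked in the proposal, nor are intersections of the $1$-chain cuboids of mixed shape with the various other shapes of remaining cuboids. In short, the proposal misidentifies the true bottleneck (which is $2 \times \cdots \times 2 \times (n_r+1)$, not $\{v\} \times M_2 \times \cdots \times M_r$) and leaves the pairwise verification incomplete; replacing the second half with the observation that the $1$-chain cuboids are automatically harmless both repairs and considerably shortens the argument, reducing to the ending of Lemma 10.4 as the paper states.
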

\begin{proof}
Using Lemma 8.3, all product cuboids have a very proper decomposition except possibly for the ones formed by products of maximal chains, and minimal length chains.

Consider first the case when the even dimensional hypercube (which we can assume to be in the last factor) has a maximal length chain in it. Then we can argue exactly as in the proof of Lemma 10.4 to get proper decompositions of all such cuboids except ones of the form $2 \times 2 \times \ldots \times (n_r+1)$.

Next, consider the case that the even dimensional hypercube factor has a $1$-element chain. Then as the even dimensional hypercube is good, any two such cuboids will be disjoint. Furthermore, they will intersect any $2 \times 2 \times \ldots \times (n_r+1)$ cuboid in at most one element. Consequently, we can reduce ourselves to only considering $2 \times 2 \times \ldots \times 2 \times (n_r+1)$ cuboids, and the proof then finishes off identically to the proof of Lemma 10.4.
\end{proof}

This completes the proof of Theorem 3.3. Theorem 3.1, and hence also Corollary 3.2, now follows from Corollary 10.6, by using the $3$ almost orthogonal symmetric chain decompositions of $Q_5$ and $Q_7$ in Subsections 4.2 and 4.3 respectively.

\begin{cor}
We have 3 almost orthogonal symmetric chain decompositions for $Q_n$ for $n \ge 5$ with the possible exceptions of $6,8,9,11,13,16,18,23$ (which are the numbers not non-negative linear combinations of 5 and 7).
\end{cor}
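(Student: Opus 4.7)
The plan is simply to feed the good three-fold decompositions of $Q_5$ and $Q_7$ constructed in Subsections 4.2 and 4.3 into Theorem 3.3. For any integer $n$ that can be written as $n = 5a + 7b$ with $a, b \ge 0$ and $a + b \ge 1$, I take $a$ copies of the good family of three almost orthogonal symmetric chain decompositions of $Q_5$ together with $b$ copies of the analogous family for $Q_7$. Every hypercube in the product is odd-dimensional and the associated family is good, so the hypotheses of Theorem 3.3 are satisfied outright (in fact \emph{all} odd-dimensional families are good, which is strictly stronger than the ``at least two good'' requirement). Theorem 3.3 therefore produces three almost orthogonal symmetric chain decompositions of $Q_{5a+7b} = Q_n$.

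It remains to identify which $n \ge 5$ admit no such representation. This is a standard numerical semigroup computation: since $\gcd(5,7) = 1$, the Frobenius number for $\{5,7\}$ is $5 \cdot 7 - 5 - 7 = 23$, so every $n \ge 24$ is expressible as $5a + 7b$ with $a, b \ge 0$. A direct enumeration of the representable integers below $24$ (namely $5, 7, 10, 12, 14, 15, 17, 19, 20, 21, 22$) shows that the non-representable integers $n \ge 5$ are exactly $\{6, 8, 9, 11, 13, 16, 18, 23\}$, which is the exception list in the statement.

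There is essentially no obstacle: the entire argument is a one-line application of Theorem 3.3, and the ``hardest'' step is the arithmetic check that the complement of $5\mathbb{Z}_{\ge 0} + 7\mathbb{Z}_{\ge 0}$ inside $\mathbb{Z}_{\ge 5}$ is the listed eight-element set. The two non-trivial ingredients, goodness of the explicit $Q_5$ and $Q_7$ decompositions and the product construction, are already in hand from the earlier sections.
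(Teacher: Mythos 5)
Your proposal is correct and matches the paper's intended argument: Corollary 10.6 follows immediately by feeding $a$ copies of the good three-fold family on $Q_5$ and $b$ copies of the good three-fold family on $Q_7$ (from Subsections 4.2 and 4.3) into Theorem 3.3 whenever $n=5a+7b$, and the exception set is exactly the complement of the numerical semigroup $\langle 5,7\rangle$ inside $\mathbb{Z}_{\ge 5}$, computed via the Frobenius number $5\cdot 7-5-7=23$. The paper states this corollary without a written-out proof, but the sentence preceding it indicates precisely this derivation, so you have reconstructed it accurately.
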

\begin{cor}
We have 3 orthogonal decompositions for $Q_n$ for $n \ge 4$ with the possible exceptions of $9,11,13,23$.
\end{cor}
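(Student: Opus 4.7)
The plan is to combine Corollary 10.6 with two auxiliary moves: the $\varnothing$-rerouting argument of the Introduction (which turns almost orthogonal symmetric chain decompositions into genuine orthogonal chain decompositions as soon as $n\ge 5$), and a non-symmetric ``layer-splitting'' construction on $Q_n=Q_{n-1}\times Q_1$ that handles each even $n\in\{6,8,16,18\}$ from its odd predecessor. The case $n=4$ is exactly what Shearer and Kleitman \cite{SK} verified directly: the conjecture holds for $n\le 4$, so $Q_4$ already admits $\lfloor 4/2\rfloor+1=3$ orthogonal chain decompositions.

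First, for every $n\ge 5$ outside the exception list $\{6,8,9,11,13,16,18,23\}$ of Corollary 10.6, I would apply that corollary to get three almost orthogonal symmetric chain decompositions. Since $n\ge 5$, the number of minimal-length chains $\binom{n}{\lfloor n/2\rfloor}-\binom{n}{\lfloor n/2\rfloor-1}$ strictly exceeds $2$ when $n$ is even and $4$ when $n$ is odd, and the Introduction records how this allows one to reroute $\varnothing$ off the unique maximal chain of each decomposition onto a fresh minimal-length chain avoiding the previous two choices; the outcome is three pairwise orthogonal chain decompositions.

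Next, for the four even values $n\in\{6,8,16,18\}$, the predecessor $n-1\in\{5,7,15,17\}$ is a non-negative integer combination of $5$ and $7$, so by the previous step $Q_{n-1}$ carries three orthogonal chain decompositions $\mathcal{F}_1,\mathcal{F}_2,\mathcal{F}_3$. Identifying $Q_n$ with $Q_{n-1}\times\{0,1\}$, I would define $\widehat{\mathcal{F}}_i$ by splitting each chain $A\in\mathcal{F}_i$ into the two chains $A\times\{0\}$ and $A\times\{1\}$. Because $n$ is even, the symmetry $\binom{n-1}{n/2}=\binom{n-1}{(n-2)/2}$ together with Pascal yields $\binom{n}{n/2}=2\binom{n-1}{(n-2)/2}$, so each $\widehat{\mathcal{F}}_i$ contains exactly the Sperner-minimal number of chains. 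For $A\in\mathcal{F}_i$ and $B\in\mathcal{F}_j$ with $i\ne j$, the intersection $(A\times\{\varepsilon\})\cap(B\times\{\delta\})$ is empty when $\varepsilon\ne\delta$ and has size $|A\cap B|\le 1$ when $\varepsilon=\delta$, so $\widehat{\mathcal{F}}_1,\widehat{\mathcal{F}}_2,\widehat{\mathcal{F}}_3$ are pairwise orthogonal, as required.

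The main point to stress --- and the reason this layer-splitting step does not already appear in Corollary 10.6 --- is that each $\widehat{\mathcal{F}}_i$ is intentionally \emph{not} a symmetric chain decomposition, since $A\times\{0\}$ and $A\times\{1\}$ each sit one rank away from being symmetric in $Q_n$. This is permitted because the statement asks only for orthogonal chain decompositions, not symmetric ones, and it is precisely what allows the exception list to shrink from $\{6,8,9,11,13,16,18,23\}$ to $\{9,11,13,23\}$. There is no serious obstacle beyond recognizing that the parity identity $2\binom{n-1}{(n-2)/2}=\binom{n}{n/2}$ supplies the right chain count for free precisely when $n$ is even.
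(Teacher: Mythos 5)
Your proof is correct and is essentially the same argument as the paper's: the ``layer-splitting'' of $Q_n=Q_{n-1}\times\{0,1\}$ into the chains $A\times\{0\}$ and $A\times\{1\}$ is exactly the paper's construction of duplicating each chain of $Q_{2k-1}$ and adding the element $2k$ to the duplicate, and the paper likewise handles $n=4$ by citing Shearer--Kleitman. Your write-up is just more explicit about the chain count via $\binom{n}{n/2}=2\binom{n-1}{(n-2)/2}$ and about why the resulting decompositions need not be symmetric, but the idea and its scope (eliminating exactly the even exceptions $6,8,16,18$) coincide with the paper's proof.
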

\begin{proof}
Given a collection of orthogonal decompositions of $Q_{2k-1}$, we can construct a collection of orthogonal decomposition for $Q_{2k}$ by, for each decomposition, duplicating each chain and adding the element $2k$ into each element of the duplicate chain. We leave $n=4$ to the reader (or see \cite{SK}).
\end{proof}

\section{Proof of Theorem 3.4}
In this section, we will show how to decompose a product of hypercubes if there are at least 6 odd dimensional hypercubes, and all the even dimensional hypercubes are good using \cite{taut}. Note that as before, $k \ge 3$ implies $n_i \ge 4$, so $(\ast)$ holds for all cuboids we consider. By repeatedly applying Theorem 6.1 or 9.1 with one of the odd dimensional hypercubes, we can reduce to the case of at most one even dimensional hypercube.

The proofs of Lemmas 10.4 and 10.5 now apply verbatim up until we are only left with $2 \times 2 \times \ldots \times 2 \times (n_r+1)$ cuboids (with at least five 2's). No clever manipulations allow one to (very) properly decompose such cuboids from the results we have, but as it turns out, one can bootstrap a certain decomposition of $2 \times 2 \times 2 \times 2 \times 2 \times 5$ to decompose the product of at least five $2$'s with a chain of length at least $5$ in a very proper way. We refer to \cite{taut} for the very explicit verification that such decompositions exist.

\begin{ack*}
I would like to thank Mitchell Lee for helpful discussions.
\end{ack*}


\begin{thebibliography}{9}

\bibitem{taut}
David S., Spink H., and Tiba M. (2017) \textit{Symmetric chain decompositions of products of posets with long chains}, arXiv:1706.08546, submitted.

\bibitem{GK}
Greene, C. and Kleitman, D.J. (1976) \textit{Strong versions of Sperner's theorem. J. Combinatorial Theory Ser. A}, 20(1):80--88.

\bibitem{lyndon}
Hersh, P. and Schilling, A. (2013) \textit{Symmetric chain decomposition for cyclic quotients of Boolean algebras and relation to cyclic crystals}, Int. Math. Res. Notices (2) 463--473.
\bibitem{LO}
Kleitman, D. J. (1970) \textit{On a lemma of Littlewood and Offord on the distribution of linear combinations of vectors}, Adv. Math. 5 251--259. 

\bibitem{SK}
Shearer, J. and Kleitman, D.J. (1979) \textit{Probabilities of independent choices being ordered}, Stud. Appl. Math. 60, no. 3, 271--276. 

\end{thebibliography}
\end{document}